\newtheorem{theorem}{Theorem}[section]
\newtheorem{definition}{Definition}[section]
\newtheorem{example}{Example}[section]
\newcommand{\tri}[1]{{\left\vert\kern-0.25ex\left\vert\kern-0.25ex\left\vert #1 
    \right\vert\kern-0.25ex\right\vert\kern-0.25ex\right\vert}}
\renewcommand{\algorithmiccomment}[1]{\hfill\eqparbox{COMMENT}{$\triangleright$ #1}}
\newsavebox{\measurebox}
\newenvironment{proof}{\paragraph{Proof:}}{\hfill$\square$}
\begin{document}

\begin{frontmatter}

\title{Towards fast weak adversarial training to solve high dimensional parabolic partial differential equations using XNODE-WAN}

 \author[ucl]{Paul Valsecchi Oliva}
 \author[oxford,ATI,clyde]{Yue Wu}
    \author[texas]{Cuiyu He}
      \author[ucl,ATI,*]{Hao Ni}
 \affiliation[ucl]{organization={Mathematics Department, University College London},
             addressline={25 Gordon St},
             city={London},
             postcode={WC1H 0AY},
             country={UK}}
            
\affiliation[oxford]{organization={Mathematical Institute, University of Oxford},
            addressline={ Radcliffe Observatory, Andrew Wiles Building, Woodstock Rd }, 
            city={Oxford},
            postcode={OX2 6GG}, 
            country={UK}}

 \affiliation[ATI]{
             organization={Alan Turing Institute},
             addressline={2QR, John Dodson House, 96 Euston Rd},
             city={London},
             postcode={NW1 2DB},
             country={UK}}            
\affiliation[clyde]{organization={The Department of Mathematics and Statistics, University of Strathclyde},
            addressline={26 Richmond St}, 
            city={Glasgow},
            postcode={G1 1XQ}, 
            country={UK}}
\affiliation[texas]{organization={School of Mathematics and Statistical Science, University of Texas Rio Grand Valley},
            addressline={1 West University Blvd}, 
            city={Brownsville,TX},
            postcode={78520}, 
            country={USA}}
\affiliation[*]{Corresponding author:h.ni@ucl.ac.uk}
\begin{abstract}
Due to the curse of dimensionality, 
solving high dimensional parabolic partial differential equations (PDEs) has been a challenging problem for decades. Recently, a weak adversarial network (WAN) proposed in (Y.Zang et al., 2020) offered a flexible and computationally efficient approach to tackle this problem defined on arbitrary domains by leveraging the weak solution. WAN reformulates the PDE problem as a generative adversarial network, where the weak solution (primal network) and the test function (adversarial network) are parameterized by the multi-layer deep neural networks (DNNs). However, it is not yet clear whether DNNs are the most effective model for the parabolic PDE solutions as they do not take into account the fundamentally different roles played by time and spatial variables in the solution. To reinforce the difference, we design a novel so-called XNODE model for the primal network, which is built on the neural ODE (NODE) model with additional spatial dependency to incorporate the a priori information of the PDEs and serve as a universal and effective approximation to the solution.  The proposed hybrid method (XNODE-WAN), by integrating the XNODE model within the WAN framework, leads to significant improvement in the performance and efficiency of training. Numerical results show that our method can reduce the training time to a fraction of that of the WAN model. 

\end{abstract}

\begin{keyword}
parabolic partial differential equation \sep generative adversarial network \sep neural ordinary differential equation \sep weak adversarial network
\end{keyword}

\end{frontmatter}


\section{Introduction}

Applying classical numerical methods (finite element, finite difference, finite volume, etc) to solve high dimensional partial differential equations (PDEs) has been highly challenging due to the notorious curse of dimensionality \cite{han2018solving, hutzenthaler2020overcoming}. Traditional numerical methods, used as PDE solvers, lead to an exponential growth of computing cost with respect to the dimension of the PDE problem. The application of neural networks on high dimensional data, including image classification \cite{druzhkov2016survey} and natural language processing \cite{goldberg2016primer}, has empirically proven to be successful. Recently, applying deep learning models to tackle high dimensional PDE problems, which achieved superior empirical results
\cite{han2018solving,raissi2019physics,li2020multipole,he2020mesh,lu2021deepxde}, has been a growing area of research. Relevant numerical analysis that takes into account of approximation rates, generality and optimization theory can be also found in many works, see \cite{berner2020analysis,duan2021convergence,hutzenthaler2020overcoming,luo2020two}. 

As an important class of PDEs, parabolic PDEs have been studied extensively and applied to a various number of fields, including acoustic propagation \cite{gilbert1989application}, heat conduction \cite{douglas1956numerical}, image denoising \cite{hahn2011orientation}, derivative pricing \cite{achdou2005computational}, etc. In \cite{dgmsirignano2018, zang2020weak}, multi-layer deep neural networks (DNNs) are used to approximate the solution to parabolic problems thanks to their universality. However, DNNs do not take into account the fundamental difference in the roles played by time and spatial variables in the parabolic solution, and hence may not be efficient in capturing the spatio-temporal dependence of the parabolic solution. To this end, we propose a novel so-called XNODE model for the solution to reinforce the spatio-temporal dependence. It effectively incorporates the a priori information of the PDEs through the modified neural ordinary differential equations (NODEs)\cite{chen2018neural}.

More specifically, we consider the following parabolic PDE defined on an arbitrary bounded domain $\mathcal{D} \subset [0, T] \times \mathbb{R}^{d}$ under the mild condition, 
\begin{align}\label{pde}
\begin{cases}
\partial_t u(t,\mathbf{x})-\overset{d}{\underset{i=1}{\sum}}\partial_i(\overset{d}{\underset{i=1}{\sum}}a_{ij}(t,\mathbf{x})
\partial_ju(t,\mathbf{x}))+\overset{d}{\underset{i=1}{\sum}}b_i(t,\mathbf{x})\partial_iu(t,\mathbf{x}) &\\
+c(u, t,\mathbf{x})-f(t,\mathbf{x})=0& \text{for } (t, \mathbf{x}) \in \mathcal{D},\\
u(t, \mathbf{x})= g(t,\mathbf{x}) & \text{on }\partial \mathcal{D},\\
u(0,\mathbf{x})-h(\mathbf{x})=0 & \text{on }\Omega(0).
\end{cases}
\end{align}
where $a_{ij},b_i, f:\mathcal{D}\to\mathbb{R}$, $c: \mathbb{R} \times \mathcal{D} \to \mathbb{R}$ and $h: \Omega(0) \to \mathbb{R}$ are given. $c$ can be a non-linear function with respect to the first augment. Let $\Omega(t):= \{\mathbf{x} | (t, \mathbf{x}) \in \mathcal{D}\}$ denote the spatial domain of $\mathcal{D}$ when restricting time to be $t$. Note that $\mathcal{D}$ could be not only the time-independent domain, i.e. there exists $\Omega \subset \mathbb{R}^{d}$, such that $\mathcal{D} = [0, T] \times \Omega$ , but also the time varying domain, i.e.  $\Omega(t)$ changes over time $t$. 

The motivation of the XNODE model lies in the key observation that for a fixed spatial variable $\mathbf{x}$, a parabolic PDE solution $\tilde{u} = u(., \mathbf{x})$ evolves into an ODE solution, which naturally leads to employing  the NODE network \cite{chen2018neural}. A NODE network builds a general model by leveraging the ODE solver and parameterising the vector field as a neural network, which can be regarded as a continuous version of the residual neural network. The NODE model recently introduced in \cite{chen2018neural} soon became a top-pick model for (continuous) time series modelling because of its efficiency and scalability. As opposed to classical neural networks models, the gradient calculation of the NODE models via the adjoint method has constant memory cost.

Therefore, to model the parabolic PDE solution efficiently, the above observation leads to the proposed XNODE model, which is built by introducing the additional dependency on the spatial variable $\mathbf{x}$ to the NODE model. More specifically, the XNODE model takes any spatial variable $\mathbf{x}$ and $h$ from \eqref{pde} as the input, and returns the output of the NODE with vector fields dependent on spatial variable $\mathbf{x}$ to predict the solution $\tilde{u} = u(., \mathbf{x})$. In contrast to the DNNs which treat the time and spatial variable equally, the proposed XNODE model utilizes the NODE to capture the temporal dependency of the solution path $\tilde{u}$, while the spatial dependence is embedded to the vector field of NODE model. In the XNODE model, the initial condition of PDEs $h$ is incorporated effectively through the initial condition of the NODE. Moreover, we introduce efficient data sampling and path construction with the XNODE method to ensure efficiency for predicting the solution on both time-independent domains and time-varying domains.  


The formulation of the PDE  problem using deep learning can be broadly divided into two classes, (1) optimisation problems and (2) min-max problems. For the first category, learning the PDE solution is translated into minimizing the loss function, which is used to quantify the performance of the estimated solution. For example, one can consider the mean square error regarding the equilibrate equations and boundary and initial conditions as the loss function, see e.g., \cite{cai2020deep,dgmsirignano2018, he2020mesh}. The least square approach is known for its generality. However, the evaluation of high order derivatives are required in the cost functional if the equilibrate equation is directly penalized  \cite{dgmsirignano2018, he2020mesh}. Another commonly used cost functional is based on the energy or minimal principal, \cite{weinan2018deep,samaniego2020energy,hong2021priori}. Though the energy functional often only involves the lower order derivatives, i.e. no higher derivative is required to compute, the energy based method is restricted to certain differential operators for which a minimal principal holds. The second category is to utilize the generative adversarial networks (GANs) to solve the PDEs by using the min-max game formulation. The weak adversarial network (WAN) \cite{zang2020weak} is a typical example of this type. By leveraging the weak formulation of the PDE solution, WAN converts the task of finding the weak solution into a saddle point problem with the solution $(u, \phi)$, where $u$ is the solution to the PDE, and $\phi$ is a test function. Compared with the least square method of the first category, WAN can be applied to solve a large general class of PDEs, including the cases where the strong solution does not exist as long as the weak solution is well defined. It also can be applied to the case where the minimal principal of the PDEs is not satisfied. ax optimization, it may bring some computational benefits resulted from only requiring lower order derivative computation. 

Based on the above considerations, we propose the hybrid XNODE-WAN method by incorporating the XNODE model into the WAN framework \cite{zang2020weak} as an enhancement of the WAN to learn the solutions to parabolic PDEs more efficiently. In the WAN framework, there is a primal network and an adversarial network to approximate the solution ($u$) and the test function ($\phi$) respectively, which are both approximated by DNNs\cite{zang2020weak}. In our proposed XNODE-WAN method, we replace the DNNs with the XNODE model for the primal network of WAN, as the XNODE model is able to better model the spatio-temporal dependency of the solution and encode a priori information of the PDE, which leads to reduced training time and faster convergence. As many of the other deep learning methods for PDE solvers \cite{zang2020weak, he2020mesh, hong2021priori}, the proposed XNODE-WAN method is mesh-free. It can be applied to a general class of the spatio-temporal domains, whose spatial domain can be irregular and/or time-varying.

Numerical results show that our proposed method significantly outperforms WAN in terms of training efficiency for both time-independent and time-varying domains. Moreover, such improvement of the XNODE-WAN model is consistent regardless of the dimension of the spatial variable, which demonstrates a better scalability for the high dimensional PDEs.    

The paper is structured as follows. Section \ref{sec:WAN} gives a brief introduction to the WAN framework \cite{zang2020weak}. It is followed by Section \ref{sec:NODE-WAN}, which starts with preliminaries of NODE model and then introduces the proposed XNODE model for the primal solution and the hybrid XNODE-WAN method for parabolic PDE solution on general time-space domains. Numerical results are provided in Section \ref{sec:numerical}. Section \ref{sec: Conclusion} concludes the paper.

\section{Weak Adversarial Network (WAN) framework}\label{sec:WAN}
In this section, we provide a brief summary of theoretical underpinning and a general framework of the weak adversarial network for solving the linear and non-linear parabolic PDEs of form \eqref{pde}. 
\subsection{The weak formulation}\label{sec:weakformula}

We consider the PDE problem \eqref{pde} on a time-dependent bounded domain denoted by
$\mathcal{D} \subset [0, T] \times \mathbb{R}^{d}$ with $\Omega(t)= \{\mathbf{x} | (t, \mathbf{x}) \in \mathcal{D}\}$,
where we impose typical assumptions such as the uniformly parabolic property of the differential operator \footnote{The uniformly parabolic property means
there exists $\alpha>0$ such that 
$	\sum_{i,j =1}^d a_{ij}(x,t) \zeta_i \zeta_j \ge \alpha | \zeta|^2 \quad \forall \zeta \in \mathbb{R}^n, (x,t) \in [0,T] \times \Omega.$}. 
We assume further that $a_{ij} \in L^{\infty}(\mathcal{D})$, $f \in L^2(\mathcal{D})$ and $h \in L^2(\Omega(0))$.

When the problem \eqref{pde} is linear and on a time-independent domain, i.e., $\mathcal{D} = [0,T] \times \Omega$,  the well-posedness of equation \eqref{pde} can be ensured with the solution $u \in L^2([0,T], H_0^1(\Omega)):=\{ v: v \in L^2([0,T], H^1(\Omega), v|_{\partial \Omega} = 0  \}$ and $\partial_t u \in L^2([0,T],H^{-1}(\Omega))$, given the coefficients are uniformly elliptic \cite{renardy2006introduction}. 

In general, we consider the time-space domain $\mathcal{D} \subset [0,T]  \times \mathbb{R}^d$. We now define the following bilinear form: 
\begin{align*}
    	B(u,v) &= \int_0^T \int_{\Omega(t)} \partial_t u \cdot v \mathrm{d}x\mathrm{d}t\\
    	&+  \int_0^T \int_{\Omega(t)} 
	\left(\sum_{i j} a_{ij} \partial_j u \partial_i v + \sum_i b_i \partial_i u, v + cu v \right)\mathrm{d}x \mathrm{d}t
\end{align*}
and the linear operator:
\[
	f(v) = \int_0^T \int_{\Omega(t)} f v \mathrm{d}x \mathrm{d}t.
\]

 For the ease of notation, we define $H_0^{1}(\mathcal{D})$, $L^{2}(\mathcal{D})$ and $L^{2}(\partial \mathcal{D})$ by
\begin{eqnarray*}
H_0^{1}(\mathcal{D}) := \Bigg\{ f: \mathcal{D}  \rightarrow \mathbb{R} \Bigg\vert&& \forall t \in [0, T], f(t, \cdot) \in H_0^{1}\big(\Omega(t)\big)  \text{ and }\\
&&\int_{0}^{T} \int_{\Omega(t)}|f(t,x)|^2 + | \nabla f(t,x)|^2dxdt < + \infty \Bigg\},
\end{eqnarray*}
\begin{eqnarray*}
L^{2}(\mathcal{D}) := \Bigg\{ f: \partial\mathcal{D}  \rightarrow \mathbb{R} \Bigg\vert \int_{0}^{T} \int_{\Omega(t)}|f(t,x)|^2 dxdt < + \infty \Bigg\},
\end{eqnarray*}
and
\begin{eqnarray*}
L^{2}(\partial\mathcal{D}) := \Bigg\{ f: \mathcal{D}  \rightarrow \mathbb{R} \Bigg\vert \int_{0}^{T} \int_{\partial\Omega(t)}|f(t,x)|^2 dxdt < + \infty \Bigg\},
\end{eqnarray*}
where $\nabla f(t,\cdot)$ is the weak derivative of $f(t,\cdot)$. 

From integration by parts, it is easy to check that the weak solution $u$ to \eqref{pde} satisfies the following variational problem: 
\begin{equation}\label{eqn:equal}
	B(u,v)  = f(v) \quad \forall v \in  H^1_0(\mathcal{D}).
\end{equation}
Therefore, it leads to a new interpretation of $u$ as a solution to the below minimization problem: 
\begin{equation}\label{inf-sup}
	\inf_{w \in H^1_0(\mathcal{D})} \sup_{v \in H^1_0(\mathcal{D}), v \neq 0} \dfrac{|B(w,v) -f(v)|^2}{\|v\|^2_{L^2(\mathcal{D})}},
\end{equation}
where $\|v\|_{L^2(\mathcal{D})}$ is $L^2$ norm of $v$(\cite{zang2020weak}). Note that for a given $w$, the optimal test function $v$ is the one to attain the supremum $|B(w,v) -f(v)|^2/\|v\|^2_{L^2(\mathcal{D})}$. In \eqref{inf-sup} we reformulate the original parabolic PDE into an inf-sup type optimisation problem, which sets the foundation for the WAN framework.





\subsection{WAN framework }\label{sec:wan}

In this subsection, we introduce the WAN framework initially proposed in \cite{zang2020weak} (cf Algorithm 3 in \cite{zang2020weak}) for solving parabolic PDEs by leveraging the weak formulation in Section \ref{sec:weakformula}. The main idea is to formulate the problem as the inf-sup problem \eqref{inf-sup}, where the PDE solution $u$ and the test function $\phi$ are parameterised by deep neural networks $u_\theta$ and $\phi_\eta$, respectively, with  $\theta, \eta$ being the trainable model weights. In \cite{zang2020weak}, a deep neural network is employed for both the solutions $u$ and $\phi$. The primal network will be replaced by our proposed XNODE model, which will be discussed in in Section \ref{sec:NODE-WAN}.


The loss function of \cite{zang2020weak}, which is also used in our method, consists of the interior loss, the boundary loss, and the initial loss as follows:
\begin{equation}\label{loss}
L(\theta, \eta) = L_\text{int}(\theta, \eta) + \alpha L_\text{bdry}(\theta) + \gamma L_\text{init}(\theta),
\end{equation}
where $\alpha$, $\gamma$ are hyperparameters as penalty terms and
\begin{align}\label{L_int}
\begin{split}
L_{\text{int}}(\theta, \eta)& = \log\left(\frac{|B(u_\theta, \phi_\eta) - f(\phi_\eta)|^2}
{||\phi_\eta||_{L^2(\mathcal{D})}^2}\right),\\
L_\text{init}(\theta)&= ||u_\theta(0,\mathbf{x})-h(\mathbf{x})||_{L^2(\Omega(0))}^2,\\
L_\text{bdry}(\theta)&= ||u_\theta(t,\mathbf{x})-g(t,\mathbf{x})||_{L^2(\partial \mathcal{D})}^2.
\end{split}
\end{align}
Note that the interior loss $L_{\text{int}}$ is based on the weak formulation of the solution \eqref{inf-sup} to quantify the discrepancy between the true solution $u$ and the estimator $u_{\theta}$ over the interior. Due to the different norms and log operator applied in the loss function, the coefficient $\alpha$ and $\gamma$ should be chosen numerically by cross-validation to ensure efficiency. 

The evaluation of three loss terms can be done through the Monte-Carlo simulation on the domain. We shall create a collection of points for such evaluation at every iteration of our algorithm. As shown in \cite{zang2020weak}, for any bounded domain $\mathcal{D}$, at each iteration, we uniformly sample $N_n$ points $(t_i, x_i)_{i = 1}^{N_n}$ over the interior of the domain. Similarly, we conduct the uniform sampling on the boundary $\partial \mathcal{D}$ with the corresponding number of sampling points denoted by $N_b$. Let $ \tilde L_\text{int}(\theta, \eta), \tilde L_\text{bdry}(\theta)$ and $\tilde L_\text{init}(\theta)$ denote the corresponding Monte-Carlo estimators of $L_\text{int}(\theta, \eta), L_\text{bdry}(\theta)$ and $L_\text{init}(\theta)$ based on the sampling points respectively. Then we define the empirical loss function by 
\begin{equation}\label{loss-approx}
\tilde L(\theta, \eta) = \tilde L_\text{int}(\theta, \eta) + \alpha \tilde L_\text{bdry}(\theta) + \gamma \tilde L_\text{init}(\theta).
\end{equation}
Eq. \eqref{loss-approx} is the loss function of the min-max game used in the WAN to learn the PDE solution.

Despite the same loss function of our method, due to the XNODE model, we adjust the above point sampling strategies to both time-independent domain and time-dependent domain respectively. One may refer to Section \ref{sec:independent} and \ref{subsec:NODE-WAN-b}.


The algorithm of WAN is outline in Algorithm \ref{WAN_alg}, where the lines with additional comments will be modified in our algorithm in Section \ref{sec:NODE-WAN}. A list of notation explaination can be found in Table \ref{table2}.

\begin{algorithm}[H]
\caption{WAN Algorithm}\label{WAN_alg}
\textbf{Input:} domain $\mathcal{D}$, tolerance $\epsilon>0$, $N_\mathcal{D}/N_{\partial \mathcal{D}}/N_0$: number of sampled points on the domain/boundary/initial condition,  $K_u/K_\phi$: number of solution/adversarial network parameter updates per iteration, $\alpha/\gamma \in \mathbb{R}^+:$ the weight of boundary/initial loss,  and the hyperparameters for all DNNs involved
\begin{algorithmic}[1]
\STATE{\textbf{Initialise:} the DNN network architectures $u_\theta,\phi_\eta:\mathcal{D} \to \mathbb{R}$} 
\STATE{generate points sets $X_\mathcal{D}$, $X_{\partial \mathcal{D}}$ and $X_0$} \\
\algorithmiccomment{random point sampling step}

\WHILE{$\tilde L(\theta,\eta)>\epsilon$} 
\STATE{\# \texttt{update weak solution network parameter}}
\FOR{$k = 1,...,K_u$}
\STATE{compute $\nabla_\theta \tilde L(\theta, \eta)$;}\qquad  \algorithmiccomment{gradient calculation step }
\STATE{update $\theta\gets\theta- \tau_\theta \nabla_\theta \tilde L(\theta, \eta)$;} 
\ENDFOR
\STATE{\# \texttt{update test network parameter}}
\FOR{$k = 1,...,K_\phi$}
\STATE{compute $\nabla_\eta \tilde L(\theta, \eta)$;}\qquad \algorithmiccomment{gradient calculation step}
\STATE{update \small{$\eta\gets\eta+\tau_\eta \nabla_\eta \tilde L(\theta, \eta)$;}
 }
\ENDFOR
\STATE{generate points sets $X_\mathcal{D},X_{\partial \mathcal{D}}$} and $X_0$;\\  \algorithmiccomment{random point sampling step for the next run}

\ENDWHILE\\
\textbf{Output:} the weak solution $u_\theta:\mathcal{D} \to\mathbb{R}$
\end{algorithmic}

\end{algorithm}
\section{The XNODE-WAN method}\label{sec:NODE-WAN}
  \newcommand{\bF}[1]{\mathbf{#1}}
In this section, we introduce a novel 
so-called XNODE model for the solution $u$ to the parabolic PDE problem \eqref{pde} on arbitrary spatio-temporal domains. It can be conveniently incorporated within the WAN framework by replacing the deep neural network by the XNODE model for the primal solution to achieve superior training efficiency. We will refer to our method as XNODE-WAN in the remaining contents. 

The main motivation of the proposed XNODE model comes from the observation that with the spatial variable fixed, the parabolic problem \eqref{pde} reduces to an ODE problem, which can be efficiently modelled using the neural ODE model (NODE) \cite{chen2018neural}. Hence, the proposed XNODE model captures the additional dependency on spatial coordinates, incorporates the a priori information of the parabolic PDEs and initial conditions effectively. To facilitate the efficiency of XNODE to approximate the primal solution over the domain, we propose the use of multiple constant paths to traverse the domain as the point sampling strategy over the domain.

For the rest of this section,
we start with a brief introduction to conventional NODE model. Then we proceed to discuss how to adapt the NODE model to approximate the parabolic solution by introducing additional spatial state variable, i.e., the XNODE model. This is followed by a subsection of using the XNODE-WAN model for solving time-independent domain problems, which involves incorporating the XNODE model within the WAN framework. We then generalize the proposed methodology to the time varying domains (see Figure \ref{fig:paths} for illustrative examples of time-independent domains and time-varying domains respectively).

\begin{figure}[H]
\centering
\begin{subfigure}{.485\textwidth}
\includegraphics[width=\textwidth]{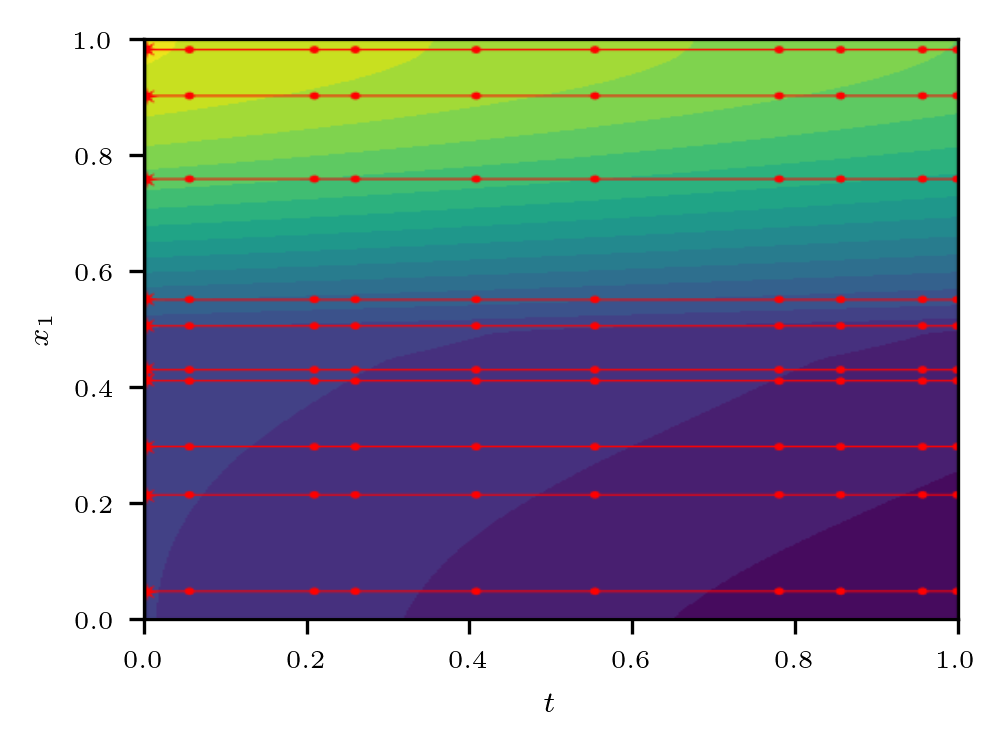}
\caption{a time-independent domain}\label{fig:path-a}

\end{subfigure}
\begin{subfigure}{.485\textwidth}
\includegraphics[width=\textwidth]{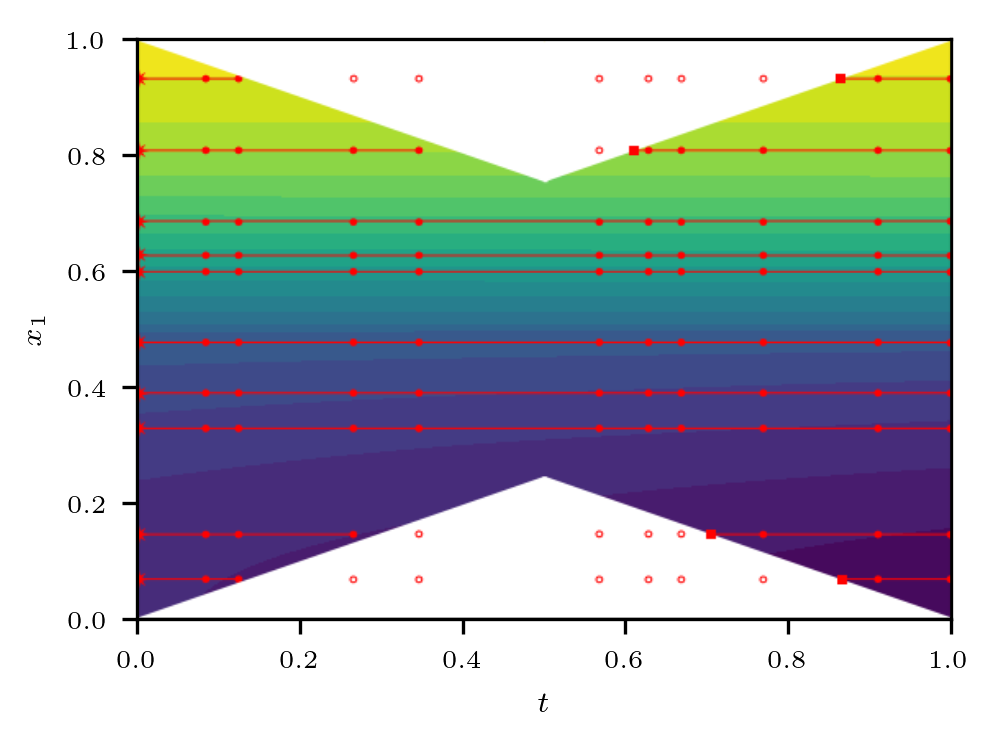}
\caption{a time-dependent domain.}\label{fig:path-b}
\end{subfigure}
\caption{An illustration of \emph{constant paths} for spatial-time domains. (a) The red stars are the initial time points and the points are all the points at which we evaluate the data. Note that they are aligned in the time dimension. (b) The red squares here represent the inserted points that act as the initial points for the paths that cannot be traced back to the initial time. }

\label{fig:paths}
\end{figure}
  
\subsection{The neural ODE (NODE) model}\label{sec:node}

The NODE \cite{chen2018neural} method that fuses concepts of ODEs and neural networks, is well known for offering benefits to time series and density modeling. One of its main benefits lies in the computational feasibility brought by the adjoint sensitivity method, for which one solves an ODE to compute the gradients with constant memory cost. 

More specifically, NODE models a continuous function which maps $\mathbb{R}^{d}$ to $\mathbb{R}^{d}$ (or $\mathcal{C}([0, T], \mathbb{R}^{d})$\footnote{$\mathcal{C}([0, T], \mathbb{R}^{d})$ denotes the space of continuous functions mapping from $[0, T]$ to $\mathbb{R}^{d}$.}) by defining the ODE model as follows: for every input $\mathbf{h}_0 \in \mathbb{R}^{d}$, the corresponding output of NODE model is $\textbf{h}(T)$ (or $\textbf{h} = (\textbf{h}(t))_{t \in [0, T]}$) where $\textbf{h}$ satisfies 
 \begin{eqnarray}\label{ODE}
\frac{\mathrm{d}\textbf{h}(t)}{\mathrm{d}t}=\mathcal{N}^{\text{vec}}_{\theta}(\textbf{h}(t), t), \quad \mathbf{h}(0) = \mathbf{h}_0.
\end{eqnarray}
Here, the vector field $\mathcal{N}^{\text{vec}}_\theta$ is a paramerized neural network with trainable weights $\theta$. The generality of the NODE model mainly come from the fact that the vector field can be universally approximated by a neural network. 
By Picard's Theorem, the initial value problem will yield a unique solution provided $\mathcal{N}_{\theta}^{\text{vec}}$ is Lipschitz continuous which is guaranteed by using finite weights and the activation functions $\texttt{relu}$ or $\texttt{tanh}$ \cite{chen2018neural}. 

The details of forward evaluation of $\mathbf{h}$ and the derivative calculation of $\mathbf{h}$ with respect to $\theta$  can be found in \cite{chen2018neural}. Note that the dependency of the NODE output $\mathbf{h}$ on the input is solely passed through the initial condition of ODE model, i.e.,  $\mathbf{h}_0$.

\subsection{XNODE-WAN}\label{sec:node_fixed_spatial_domain}
\subsubsection{Time-independent domain}\label{sec:independent}

Consider the time-independent domain $[0,T] \times \Omega$, where $\Omega \subset \mathbb{R}^d$ is bounded. When given a fixed spatial variable $\mathbf{x} \in \Omega$, the map $\tilde{u}: t \mapsto u(t, \mathbf{x})$ can be viewed as a solution to the following ODE problem from PDE \eqref{pde}:
 \begin{equation}\label{x-ODE_constant}
 	 \frac{\mathrm{d}\tilde{u}(t)}{\mathrm{d}t} = F(\tilde{u},t; \mathbf{x}), \quad \tilde{u}(0) = h(\mathbf{x}),
 \end{equation} 
 where $F$ is a function depending on $(\tilde u,t; \mathbf{x})$ defined as  
 \begin{eqnarray}\label{PDE_ODE}
 F(\tilde u, t; \mathbf{x}) = \sum_{i=1}^d\partial_i\Big(\sum_{j=1}^da_{ij}\partial_ju(t,\mathbf{x})\Big)+\sum_{i=1}^db_i\partial_iu(t,\mathbf{x})+c(u(t,\mathbf{x}))-f(t,\mathbf{x}).
 \end{eqnarray} 
Note that although $F$ is not observable due to the unknown partial derivative  $\partial_i u(t,\mathbf{x})$  , it can be universally approximated by a neural network provided that $F$ is continuous. This observation motivates us to use the NODE model \cite{chen2018neural} for a primal solution $\tilde{u}$ for each given $\mathbf{x}$. 

\textbf{XNODE Model}: Similar to the NODE model, we will employ a deep neural network $\mathcal{N}^{\text{vec}}$ to approximate the vector field $F$ in \eqref{x-ODE_constant}.
To construct a universal model for the primal solution $\tilde{u}$, we suggest in \eqref{x-ODE_constant} that the spatial variable $\mathbf{x}$ should be incorporated in the vector field $\mathcal{N}^{\text{vec}}$ as the additional dependent variable. More specifically, the XNODE model maps an arbitrary input $\mathbf{x} \in \mathbb{R}^d$ to the output $o_\mathbf{x}(t)_{t \in [0, T]} \in \mathcal{C}([0, T], \mathbb{R})$ through the $\mathbb{R}^{h}$-valued latent process $\mathbf{h} = (\mathbf{h}(t))_{t \in [0, T]}$
which is defined by the output of the NODE model with the $\mathbf{x}$-dependent vector field. More precisely, for every $ t \in [0, T]$, we have
 \begin{eqnarray*}
 \begin{cases}
 \frac{\mathrm{d}\mathbf{h}(t)}{\mathrm{d}t} = \mathcal{N}^{\text{vec}}_{\theta_2}(\mathbf{h}(t), t, \mathbf{x}), \quad \mathbf{h}(0) = \mathcal{N}^{\text{init}}_{\theta_{1}}(h(\mathbf{x})) \in \mathbb{R}^h.& \\
o_\mathbf{x}(t)= \mathcal{L}_{\tilde{\theta}}(\mathbf{h}(t)). &
 \end{cases}
 \end{eqnarray*}
where $\mathcal{N}^{\text{vec}}_{\theta_2}$ and $\mathcal{N}^{\text{init}}_{\theta_{1}}$ are neural networks fully parameterized by $\theta_{1}$ and $\theta_2$ for the vector fields and initial condition of the hidden neural $\mathbf{h}$ respectively, and $\mathcal{L}_{\tilde{\theta}}$ is a linear trainable layer with the weights $\tilde{\theta}$. We denote by $\Theta = (\theta_1, \theta_2, \tilde{\theta})$ the set of all trainable model parameters of the proposed XNODE model. We note that the output of XNODE  $o$ is a function, that lives in an infinite dimensional space. However in practice, the model output has to be finite dimensional for numerical computation. Hence given any time partition $\Pi_{T} = (t_i)_{i = 1}^{n_T}$\footnote{$\Pi_{T}$ is a time partition of $[0, T]$ if and only if $\Pi_{T} = \left\{ (t_{i})_{1}^{n_T} \Big| 0=t_1<t_2<\cdots <t_{n_T}=T \right\}$.} of $[0, T]$, the corresponding output of XNODE is given by $\text{XNODE}_{\Theta}(\mathbf{x}, \Pi_{T}):=o_\mathbf{x}(\Pi_T)$. 

In essence, our proposed XNODE model is built upon the NODE model, though it incorporates an additional spatial variable to model the high dimensional latent process of the PDE solution $\tilde{u}$. The uniqueness of XNODE follows from that of NODE, by considering $\mathbf{x}$ as the additional model parameter. Similarly, the adjoint sensitivity of XNODE is inherited from NODE, allows us to compute gradients with low memory cost. We also establish the universal approximation property of XNODE in Theorem {\color{red} \ref{thm_universality}} in the Appendix.

To effectively use the initial condition of the PDE $u(0, \textbf{x}) = h(\textbf{x})$, the hidden neuron at the initial time $\mathbf{h}(0)$ is given by a neural network $\mathcal{N}^{\text{init}}$ of $h(\textbf{x})$ from $d$-dimensional input $\mathbf{x}$ to 1-dimensional input $h(x)$, without compromising any discriminate capacity of the model. As long as $\mathcal{L}_{\tilde{\theta}} \circ \mathcal{N}^{\text{init}}$ can approximate the identity map, which is guaranteed by the universality of the neural network, it is able to achieve the satisfactory fitting performance of initial condition.

\textbf{Point sampling}. At each iteration as in the WAN Algorithm \ref{WAN_alg}, to compute $\tilde L_{\text{int}}, \tilde L_{\text{init}}$ and $\tilde L_{\text{bdry}}$ defined in \eqref{L_int}, one needs to sample points $\{(t_i, \mathbf{x}_i)\}_i$ uniformly on the interior and the boundary of the domain $ [0,T] \times \Omega$ respectively and estimate the solution $u$ evaluated at the grid. Recall that the output of the $\text{XNODE}_{\Theta}(\mathbf{x}, \Pi_{T})$ is to approximate the solution $u$ evaluated at the points $\Pi_{T} \times \{\mathbf{x}\}$. Therefore,
unlike the sampling strategy of WAN, XNODE only requires sampling independently and uniformly on both time domain and spatial domain. Let us denote the sampled time partition and collocation points of the spatial domain $\Omega$ and domain boundary $\partial \Omega$ by $\Pi_T$, $S^{r}$ and $S^b$ respectively, with $S^r:=\{\mathbf{x}_j^r\in \Omega\}_{j=1}^{N_r}$ and $S^b:=\{\mathbf{x}_b^r\in \partial\Omega\}_{j=1}^{N_b}$. Note the proposed sampling method (e.g. see the solid red points 
in Figure \ref{fig:path-a} for an illustration) implicitly generates a uniformly sampled grid of $[0,T] \times \Omega$ in a more economical way by recycling the uniformly sampled spatial points at each sampled time.\footnote{
The collocation of points on the interior (resp. boundary) of the domain 
 $[0,T] \times \Omega$ (resp. $[0,T] \times \partial\Omega$) is given by 
 $\Pi_{T} \times S^{r}$ 
 (resp. $\Pi_{T} \times S^{b}$). To generate $N_{T} \times N_{r}$ points over the interior of the domain $[0,T] \times \Omega$, the number of random points generated by the WAN is $N_{T} \times (N_{r} +N_b)$ while that of our point sampling strategy is $N_{T} + N_{r} + N_b$. } 

\textbf{Weak solution prediction.}  For any point $\mathbf{x} \in \Omega$, we can construct the corresponding \emph{constant spatial path} (i.e. $X(t)\equiv \mathbf{x}, \forall t \in [0, T]$, e.g. see a solid line in Figure \ref{fig:path-a}). The output $\text{XNODE}_\Theta (\mathbf{x}, \Pi_{T})$ then provides to a discrete approximation of the weak solution $u$ evaluated at $\{(\mathbf{x}, t_{i})\}_{t_i \in \Pi_{T}}$ along this constant spatial path. By repeating this procedure on each point $\mathbf{x}^r_{i} \in S^{r}$, we can generate a collection of constant paths $(X^r_{i})_{i = 1}^{N_r}$ such that $X^r_{i} \equiv \mathbf{x}^r_i$. Similarly we generate a collection of constant paths $(X^b_{i})_{i = 1}^{N_b}$. In total, we obtain the corresponding 
$\left(\text{XNODE}_\Theta (X_i, \Pi_{T}) \right)_{\mathbf{x}_i \in S^{r} \cup S^b}$ to approximate the solution $u$ over the grid $\Pi_T \times (S^{r} \cup S^b)$. The algorithm of using NODE model to approximate the weak solution network is given in Algorithm \ref{NODE-u}.

\begin{algorithm}[H]
\caption{XNODE Network for modelling the weak solution $u$ on a constant domain $[0,T] \times \Omega$}  \label{NODE-u}
\textbf{Input:} sampled sets of points $S^{r} = \{ \mathbf{x}_{j}^r \in \Omega \}_{j = 1}^{N_r}$ and $S^{b} = \{ \mathbf{x}_{j}^b \in \partial \Omega \}_{j=1}^{N_b}$; $\Pi_{T} = (t_{i})_{i =0}^{n_T}$ time partition of $[0, T]$; XNODE parameter set $\Theta = (\theta_1, \theta_2, \tilde{\theta})$ \\ 
\textbf{Output:} $O$ \\
\algorithmiccomment{approximate $u$ at $S^r \times \Pi_{T}$ and $S^b \times \Pi_{T}$} \\
\textbf{Algorithm:}
\begin{algorithmic}[1]
\STATE initialize $O$ as an empty vector
\FOR{$\mathbf{x}$ in $S^{r} \cup S^{b}$}
\STATE compute $o_\mathbf{x} = \text{XNODE}_{\Theta}(\mathbf{x}, \Pi_{T})$;\\ 
\algorithmiccomment{approximate $u$  at $\{x\} \times \Pi_{T}$}
\STATE set $O = \text{concatenate}(O, o_\mathbf{x})$;
\ENDFOR
\RETURN output $O$ 
\end{algorithmic}
\end{algorithm}

\textbf{XNODE-WAN Algorithm}: Finally, we obtain the XNODE-WAN model by replacing the DNN network by the XNODE network for the primal solution in the WAN algorithm (see  Algorithm \ref{WAN_alg}).
In particular, we adapt the WAN algorithm in the following steps:
\begin{itemize}
\item
In line 1, the DNN network for the primary solution $u_\theta$ is replaced by the XNODE network.
\item
In line 2, the point sampling strategy is updated as described before. For each iteration, our algorithm implements such sampling to generate a set of random collocation data points over the domain. 
\item
In line 6 and line 12, the adjoint method is used to compute the derivative of XNODE model instead of the gradient descent method of DNN network in WAN. 
\end{itemize}

The full XNODE-WAN algorithm is outlined in Algorithm \ref{XNODE-WAN_indep_domain} in Appendix.

\subsubsection{Time-dependent domain}\label{subsec:NODE-WAN-b}
In this subsection, we consider a more general case $\mathcal{D} \subset  [0,T] \times \mathbb{R}^{d}$ which allows that the spatial domain is time-dependent. Let $\Omega(t)$ denote the spatial domain of $\mathcal{D}$ restricted to any $t \in [0, T]$, i.e. $\Omega(t) = \{\mathbf{x} | (t, \mathbf{x}) \in \mathcal{D}\}$.  
When $\Omega(t)$ changes over $t \in [0, T]$, we call $\mathcal{D}$ a time-varying domain. The proposed XNODE model in Section \ref{sec:independent} can not be directly applied to a time varying domain as the constant path $X \equiv \mathbf{x} \in \Omega (0)$ might go outside the domain $\mathcal{D}$ at some time $s \in [0, T]$, i.e. $(s, \mathbf{x}) \notin \mathcal{D}$ (see e.g., the sample point marked in $\circ$ in Fig. \ref{fig:path-b}).

To circumvent the problem, we shall divide each constant path $X \equiv \mathbf{x}$ into multiple sub-paths and only keep the sub-paths remaining within the domain $\mathcal{D}$. To be more specific, we firstly define the entry and exit points on a constant path $X$.

\begin{definition}\label{def:entry}
Let $\mathcal{D}$ be a bounded domain in $[0,T] \times \mathbb{R}^d $ and fix $\mathbf{x} \in \mathbb{R}^{d}$. Let $X$ denote a constant path taking value in $\mathbf{x}$, i.e. $X(t) \equiv \mathbf{x}, \forall t \in [0, T]$. For $i = 1, 2, \cdots$, the $i^{th}$ entry point and exit point of the constant path $X$, denoted by $\underline{\tau}^{(i)}_X$ and $ \overline{\tau}^{(i)}_X$ are defined as follows:
\begin{eqnarray*}
&&\underline{\tau}^{(1)}_X = \inf_{t \in [0, T]} \{t | (t, \mathbf{x}) \in \mathcal{D}\};\quad\\
&&
\overline{\tau}^{(i)}_X = \inf_{t \in [0, T]} \{t | (t, \mathbf{x}) \notin \mathcal{D}, t \geq \underline{\tau}^{(i)}_X \}; \quad \underline{\tau}^{(i+1)}_X = \inf_{t \in [0, T]} \{t | (t, \mathbf{x}) \in \mathcal{D}, t \geq \overline{\tau}^{(i)}_X \}.
\end{eqnarray*}
where $\inf \emptyset = + \infty$ by convention. Let $N_\mathbf{x}$ denote the total number of finite $\overline{\tau}^{(i)}_{X}$, i.e. 
\begin{eqnarray*}
N_\mathbf{x} = \sup_{i \in \{1, 2, \cdots\} }\{i | \overline{\tau}^{(i)}_{X} < +\infty \},
\end{eqnarray*}
where $\sup \emptyset = -\infty$ by convention.
\end{definition}
An illustration example is given in Fig. \ref{fig:illustration}, with the blue solid dots along the blue lines indicating where the entry and exit points locate for the given constant path $X$.

For the ease of notation, when there is no ambiguity, we omit the subscript $X$ in $\underline{\tau}^{(i)}_X$ and $ \overline{\tau}^{(i)}_X$ for the rest of the paper. When restricting the constant path $X\equiv \mathbf{x}$ to the time interval from the $i^{th}$ entry point $\underline{\tau}^{(i)}$ to the $i^{th}$ exit point $\overline{\tau}^{(i)}$, we obtain a constant sub-path denoted by $ X^{i} = X_{[\underline{\tau}^{(i)}, \overline{\tau}^{(i)}]}$, whose time augmented path \footnote{Let $f: [0, T] \rightarrow \mathbb{R}^{d}$. The time augumented path of $f$ is defined as a map from $[0, T] \rightarrow [0, T] \times \mathbb{R}^{d}: t \mapsto (t, f(t))$.} takes value in the closure of $\mathcal{D}$ (e.g. see the solid red lines in Fig. \ref{fig:path-b}).

We assume that for the given domain $\mathcal{D}$, the entry and exit points of every constant path $X\equiv \mathbf{x}$ can be computed and $N_\mathbf{x}$ is finite. For any given time partition $\Pi_T$ and the spatial point $\mathbf{x}$, we propose the following way to assign the collocation of time points from $\Pi_T$ to each sub-path $X^{i}$ for the XNODE model. We define a set $\Pi^{\mathbf{x}, i}_{T}$, which is composed with the $i^th$ entry starting point $\underline{\tau}^{(i)}$ and all the elements of $\Pi_{T}$ belonging to the $i^{th}$ time interval $[\underline{\tau}^{(i)}, \overline{\tau}^{(i)}]$ for $i \in \{1, \cdots, N_x\}$. Hence $\Pi^{\mathbf{x}, i}_{T}$ is the time collocation points associated with the sub-path $X^{i}$. Fig. \ref{fig:illustration} presents an example of computing the entry and exit points and constructing time collection points of each constant subpaths.

\begin{example}
In Fig. \ref{fig:illustration},  a shaded grey region represents the domain $\mathcal{D}$. For the given constant path $X = \mathbf{x}$ marked as the solid blue line, one can compute two pairs of finite entry and exit points, denoted by $\{(\underline{\tau}^{(i)}, \overline{\tau}^{(i)})\}_{i = 1}^{2}$, which corresponds to two constant sub-paths within the domain $\mathcal{D}$. Let $\Pi_{T}=  (t_{i})_{i = 1}^{8}$ denote a time partition of length $8$ in Figure \ref{fig:illustration}. The time collocation points of the first and second sub-paths are $\Pi^{\mathbf{x},1}_{T} = (\underline{\tau}_1, t_2, t_3)$ and 
 $\Pi^{\mathbf{x}, 2}_{T} = (\underline{\tau}_X^{(2)}, t_{6}, t_{7}, t_{8}, \overline{\tau}_X^{(2)})$ respectively\footnote{Note $t_1 = \underline{\tau}_{X}^{(1)}$ and hence $\Pi^{\mathbf{x},1}_{T} = (\underline{\tau}^{(1)}) \cup (t_1, t_2, t_3) =  (\underline{\tau}^{(1)}, t_2, t_3)$.}.
\end{example}

\begin{figure}
    \centering
    \includegraphics[width = 0.6\textwidth]{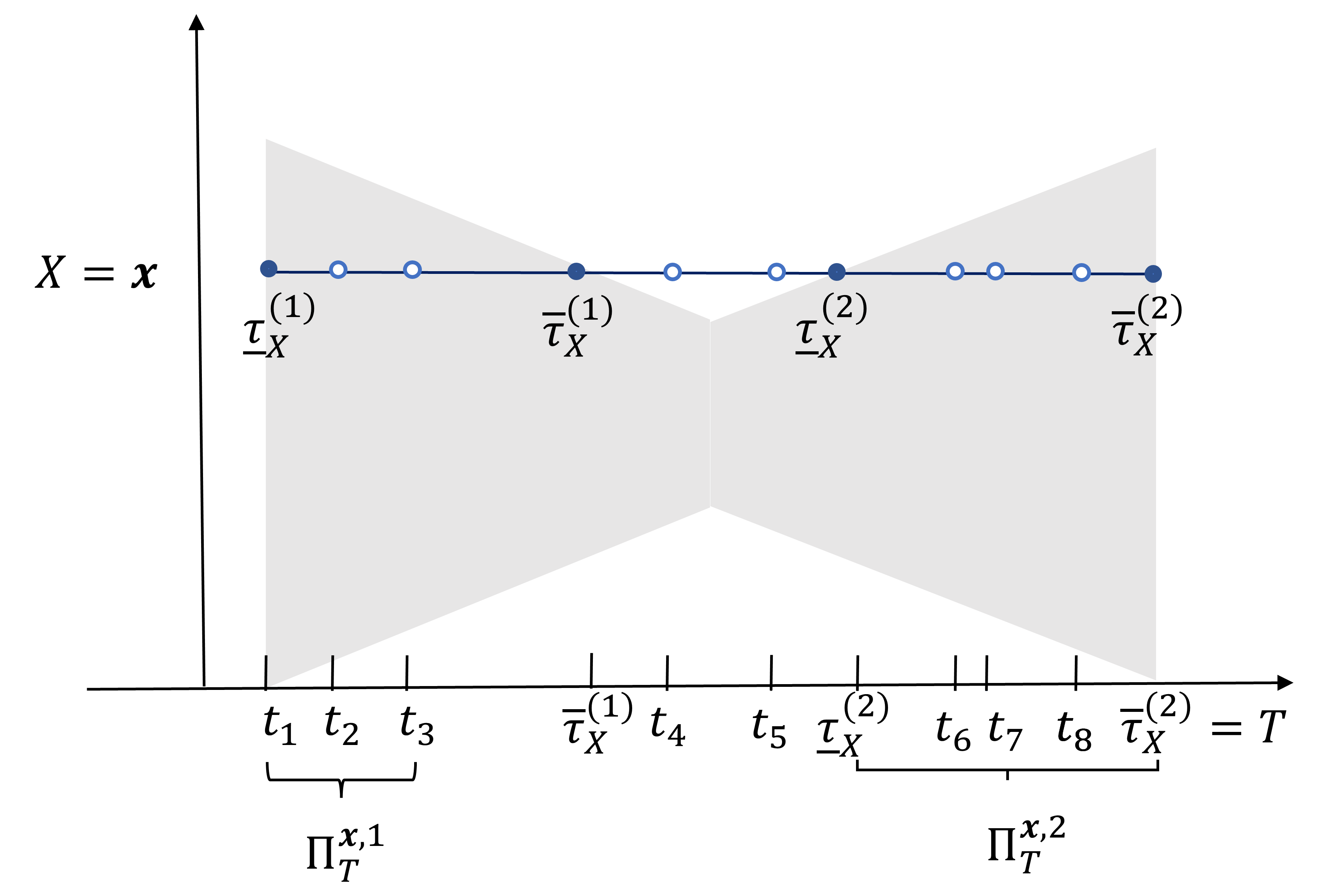}
    \caption{An illustration for entry and exit points of a constant path (see Definition \ref{def:entry}) and the corresponding subpaths $\Pi^{\mathbf{x}, 1}_{T}$ and $\Pi^{\mathbf{x}, 2}_{T}$ for the given domain presented as a shaded region. The solid line presents the constant path $X \equiv \mathbf{x}$. The time coordinates of the blue solid dots are the entry and exit points, while the blue circles represent $\mathbf{x} \times \Pi_T$ the samples along the path. Clearly there are two points $(t_4, \mathbf{x})$ and $(t_5, \mathbf{x})$ along the path but not in the shaded domain, which are filtered out when constructing sub-paths within the domain.}
    \label{fig:illustration}
\end{figure}
Now we are ready to outline the proposed modification of Algorithm \ref{NODE-u} for the time-varying domain as follows.

\textbf{Point sampling}: First of all, we find the the minimum time-independent domain to cover $\mathcal{D}$, i.e.  $\tilde{D} = [0, T] \times \Omega_{\max}$,
where $\Omega_{\max} = \cup_{t \in [0, T]}\Omega(t)$. We adopt the point sampling method in Section \ref{sec:independent} to generate the uniform grid to cover $[0, T] \times \Omega_{\max}$, i.e. $\Pi_{T} \times S^{r}$, where $S^{r}$ is sampled uniformly from $\Omega_{\max}$. Secondly, for every $\mathbf{x}\in S^r$, we compute $\{(\underline{\tau}^{(i)}, \overline{\tau}^{(i)})\}_{i = 1}^{N_\mathbf{x}}$. We then construct $\mathbf{x}$-valued constant sub-paths and determine the time collocations points associated with each sub-path as described above. The algorithm of computing $\Pi^{\mathbf{x}, i}_{T}$ is given in Algorithm \ref{alg:sub-paths-construction}. To sample of the boundary points of the general domain $\mathcal{D}$, we use the same method of WAN \cite{zang2020weak}, i.e. uniformly sample a number of points from the boundary $\partial{\mathcal{D}}$.

 \begin{algorithm}[H]
\caption{Compute the constant sub-paths and the corresponding time collocation points}  \label{alg:sub-paths-construction}
\textbf{Input:} domain $\mathcal{D}$, $\mathbf{x} \in \mathbb{R}^{d}$, $\Pi_{T}$ \\
\textbf{Output:} $(\Pi_{T}^{\mathbf{x}, i})_{ i = 1}^{N_\mathbf{x}}$\\
\textbf{Algorithm:}
\begin{algorithmic}[1]
\STATE compute the entry and exit points $(\underline{\tau}^{(i)}, \overline{\tau}^{(i)})_{i = 1}^{N_\mathbf{x}}$
\FOR{i from 1 to $N_\mathbf{x}$}
\STATE initialize $\Pi^{\mathbf{x},i}_T$ as an empty list;
\STATE add $\underline{\tau}^{(i)}$ to $\Pi^{\mathbf{x},i}_T$;
\FOR{$t$ in $\Pi_T$ and $t$ in $(\underline{\tau}^{(i)}, \overline{\tau}^{(i)}]$}
\STATE add t to $\Pi^{\mathbf{x},i}_T$;
\ENDFOR
\ENDFOR
\end{algorithmic}
\end{algorithm}

\textbf{Weak solution prediction} For each $\mathbf{x} \in S^{r}$, we have $N_\mathbf{x}$ many constant sub-paths defined on the sub-time interval $[\underline{\tau}^{i}, \overline{\tau}^{i}]$ with corresponding discrete time collocation points $\Pi^{\mathbf{x},i}_{T}$. Therefore we can estimate the weak solution $u$ evaluated at $\{\mathbf{x}\} \times \Pi^T_{\mathbf{x}, i}$ by $\text{XNODE}(\mathbf{x}, \Pi_T^{\mathbf{x}, i})$ for $i \in \{1, \cdots N_\mathbf{x}\}$. Note that the initial condition of XNODE model for each constant sub-path $X^{i}$ is well defined as $u(\underline{\tau}_{i}, \mathbf{x})$ is given by either the initial condition or boundary condition, i.e.
\begin{eqnarray}\label{Time_varying_eqn}
\mathbf{h}(\underline{\tau}^{(i)}) = \begin{cases}
\mathcal{N}^{\text{init}}_{\theta_1}(h(\mathbf{x})), & \text{if }i = 0 \text{ and }\underline{\tau}^{(i)} = 0;\\
\mathcal{N}^{\text{init}}_{\theta_1}(g(\mathbf{x},\underline{\tau}^{(i)})), & {\text{otherwise.}}
\end{cases} 
\end{eqnarray}

We now present the modified algorithm of XNODE for the time-dependent domain in Algorithm \ref{NODE-timevary}, on top of Algorithm \ref{alg:sub-paths-construction}. 
 \begin{algorithm}[H]
\caption{XNODE Network for modeling the weak solution $u$ on a time-dependent domain $\mathcal{D}$}  \label{NODE-timevary}
\textbf{Input:} sampled sets of points $S^{r} = \{ \mathbf{x}_{j}^r \in \Omega_{\max} \}_{j = 1}^{N_r}$  and $\Pi_{T} = (t_{i})_{i =0}^{n_T}$ time partition of $[0, T]$; XNODE parameter set $\Theta = (\theta_1, \theta_2, \tilde{\theta})$ \\ 
\textbf{Output:} $O$ \algorithmiccomment{approximate $u$ at $S^r \times \Pi_{T}$} \\
\textbf{Algorithm:}
\begin{algorithmic}[1]
\STATE initialize $O$ as an empty vector
\FOR{$\mathbf{x}$ in $S^{r} $}
\STATE compute $(\Pi_{T}^{\mathbf{x}, i})_{ i = 1}^{N_\mathbf{x}}$ with the input arguments $\mathcal{D}$, $x$ and $\Pi_{T}$ through Algorithm \ref{alg:sub-paths-construction};
\STATE initialize $o_\mathbf{x}$ as a zero vector of length $N_\mathbf{x}$;
	\FOR{$i$ from $1$ to $N_x$}
\STATE compute $o_{\mathbf{x},i}= \text{XNODE}_{\Theta}(\mathbf{x},\Pi_{\mathbf{x},i})$;\\ 
\algorithmiccomment{approximate $u$  at $\{\mathbf{x}\} \times \Pi_{\mathbf{x},i}^T$}
\STATE add $o_{\mathbf{x},i}$ to $o_{\mathbf{x}}$ in the corresponding locations based on $ \Pi_{\mathbf{x},i}^T$;
\ENDFOR
\STATE set $O = \text{concatenate}(O, o_\mathbf{x})$;
\ENDFOR
\RETURN output $O$
\end{algorithmic}
\end{algorithm}

\section{Numerical Results}\label{sec:numerical}

In this section, we conduct several numerical experiments of parabolic PDEs defined on time independent domains or time varying domains, respectively. We compare the performance of the proposed XNODE-WAN algorithms (Algorithm \ref{NODE-u} and Algorithm \ref{NODE-timevary}) with the baseline WAN algorithm \cite{zang2020weak}. 
In particular, following the numerical example in \cite{zang2020weak}, we consider PDE examples in the form of a $d$-dimensional nonlinear diffusion-reaction equation (Eq. \eqref{eqn:problem}) defined on a bounded domain $\mathcal{D} \subset  [0,1]\times \mathbb{R}^{d}$:
\begin{equation}\label{eqn:problem}
\begin{cases}
\partial_t u-\bigtriangleup u - u^2 -f=0 & \text{for }(t,\mathbf{x})\in \mathcal{D}\\
u-g=0 & \text{on }\partial\mathcal{D}\\
u(0,\mathbf{x})-h(\mathbf{x})=0 & \text{on }\Omega(0),
\end{cases}
\end{equation}
where $f$, $g$ and $h$ may vary from example to example. In Section \ref{sec:num_timenovarying}, we consider PDE \eqref{eqn:problem} on two time independent domains $\mathcal{D}= [0,1]\times \Omega $ where $\Omega $ is a hyper cube or a unit ball. 
Note our example where $\Omega$ is a hyper cube is the same as that in Section 4.2.6 of \cite{zang2020weak}. We then investigate the scalability of our proposed method with respect to high spatial dimension in Section \ref{sec:scalability} and follow by a sensitivity analysis in Section \ref{sec:sensitivity}.  
At the end of this section, we provide an example of a time-varying domain. Numerical implementation of our work is available at \url{https://github.com/paulvoliva/XNODE-WAN-PDE-solver.git}.

\subsection{Experimental setup}\label{sec:setup}
To quantitatively access the accuracy and efficiency of the method, we consider the following test metrics:
\begin{enumerate}
    \item
    The relative error:  $||u_\theta - u||_{L^2}/||u||_{L^2}$, where
$u_\theta$ and $u$ are the approximation and exact solution of the problem with  $||u||_{L^2} =\left(\int_{0}^{T} 
\int_{\Omega(t)}|u(t,\mathbf{x})|^2 \mathrm{d}x\mathrm{d}t\right)^{\frac{1}{2}}.$
\item Time per epoch: the average time elapsed in each epoch in the training process.  
\item $N_{\epsilon}$: for any error tolerance level $\epsilon >0$, the minimum number of epochs such that the relative error of the trained model is no more than $\epsilon$. 
    \item $T_{\epsilon}$: given any error tolerance level $\epsilon >0$, the minimum of time (seconds) such that the relative error of the trained model is no more than $\epsilon$; we would also refer to it the time at convergence. 
\end{enumerate}

For a fair comparison, we choose the following hyperparametes, which are the same for both WAN and XNODE-WAN: $N_r= N_b = 400$, $n_T=20$, $K_u=2$, $K_\phi=1$, $\alpha=\gamma=400,000\times d^2$ and the learning rate of the adversarial network $\tau_\eta=0.04$ (see Table \ref{table2} for the summary of notations of variables used in models and algorithms). Note that we set $N_\mathcal{D}=N_r*n_T$ and $N_{\partial\mathcal{D}}=N_b*n_T$ for WAN to have a fair comparison. Besides we keep the same network architecture of the adversarial network (test function $\phi_{\eta}$) the same for both WAN and XNODE-WAN. However, as the training performance of each method may be sensitive to the learning rate, the learning rate of the primal network should be adjusted to the XNODE or DNN separately. By hyper-parameter tuning, the learning rate of the XNODE-WAN and WAN for the primal network are chosen to be 0.015 and 0.00005 respectively. Note when increasing the learning rate of the WAN algorithm from 0.00005, the WAN may suffer from either divergence or slower training. One may refer the effects of the learning rate in Section \ref{sec:sensitivity}. We use the above hyper-parameter setting for all the following numerical examples.

\subsection{Time-independent domains}\label{sec:num_timenovarying}

In this subsection, we investigate the performance of the proposed method on a time-independent domain $\mathcal{D}=[0,1]\times \Omega$. To demonstrate that our method is applicable to a general spatial domain $\Omega$ , we consider two examples of $\Omega$, i.e. (1) a hyper-cube $\Omega = [0, 1]^{d}$ and (2) a unit ball $\Omega = \{x \in \mathbb{R}^d| |x| \leq 1\}$ for $d=5$.

\begin{example}\label{ex1}
We consider the same example in Section 4.2.6 in \cite{zang2020weak} with $f, g, h$ specified as follows:
\begin{eqnarray}\label{eqn:fgh_compare}
\begin{cases}
f(t,\mathbf{x}) 
= (\pi^2 - 2)\sin\left(\frac{\pi}{2}\right)
\cos\left(\frac{\pi}{2} e^{-t}\right) 
    -4 \sin^2(\frac{\pi}{2}x_1) \cos(\frac{\pi}{2}x_2)e^{-2t};&\\
g(t,\mathbf{x}) 
= 2\sin(\frac{\pi}{2}x_{1})\cos(\frac{\pi}{2}x_{2})e^{-t};&\\
h(\mathbf{x}) \quad
= 2 \sin(\frac{\pi}{2}x_1)\cos(\frac{\pi}{2}x_2).&
\end{cases}
\end{eqnarray} 
where $\Omega=[0,1]^5$. 
The exact solution is given by 
\begin{equation}\label{eqn:eg1_sol}
u(t,\mathbf{x}) = 2\sin\left(\frac{\pi}{2}x_1\right) \cos\left(\frac{\pi}{2}x_{2}\right)e^{-t}.
\end{equation}
\end{example}

The stopping criteria for both algorithms is set to achieve a relative error tolerance level $\epsilon = 10^{-3}$.  
 We also randomly test $50$ trials on randomly selected data to measure the relative $L^2$ error on the obtained models.  We report the mean and the standard deviation of relative error of the XNODE-WAN and WAN in Table \ref{table}, which demonstrates that XNODE-WAN model significantly outperforms in terms of computational time  and iteration number. In particular,
 to reach the same stopping criteria, XNODE-WAN model saves $99.8\%$ in time and only takes less than 1.5 minutes for training in this case. Furthermore, XNODE-WAN and WAN  need $165$ and $15,463$ iterations respectively whereas the time used for each iteration is comparable. In addition, \Cref{fig: sol_training_loss1} gives the evolution of relative errors over time for both models. Clearly the proposed model converges much faster than that of WAN.

\begin{table}[H]
\centering
\resizebox{\columnwidth}{!}{
\begin{tabular}{|l|l|l|l|l|l|}
\hline
         & Relative error ($\pm$ SE)  & Time per epoch (s) & $N_\varepsilon$ &  $T_\varepsilon $ (s)\\
         \hline
WAN      &  2.6\%  $\pm$ 0.0144\%  &  0.38 & 15,463 & 5865 \\
\hline
XNODE-WAN &  1.7\% $\pm$ 0.0114\% & 0.35 &  211  &  74 \\
\hline
\end{tabular}}
\caption{Performance comparison between XNODE-WAN and WAN on \Cref{ex1}.}\label{Tab1_eg1}
\label{table}
\end{table}

\Cref{fig: sol_training_loss2} and \Cref{fig: sol_training_loss4} demonstrates the comparison between the XNODE-WAN and WAN estimator for the PDE solution $u(T, \mathbf{x})$ at the convergence and their respective absolute point-wise error on the $(x_1, x_2)$ domain. It shows that the out-of-sample point-wise absolution error of our method has less magnitude than that of WAN. The superior performance of our method over WAN is also verified by a smaller relative error ($1.7\% \pm 0.0114\%$ v.s. $2.6\% \pm 0.0144\%$) with statistical significance.  

Finally, in \Cref{fig: sol_training_loss5}, we show that when XNODE-WAN reaches the stopping criteria at 74s, the WAN solution is far from being close to the true solution with large error.

\begin{figure}[!hb]
   \begin{subfigure}[b]{0.34\textwidth}
\includegraphics[width=\textwidth]{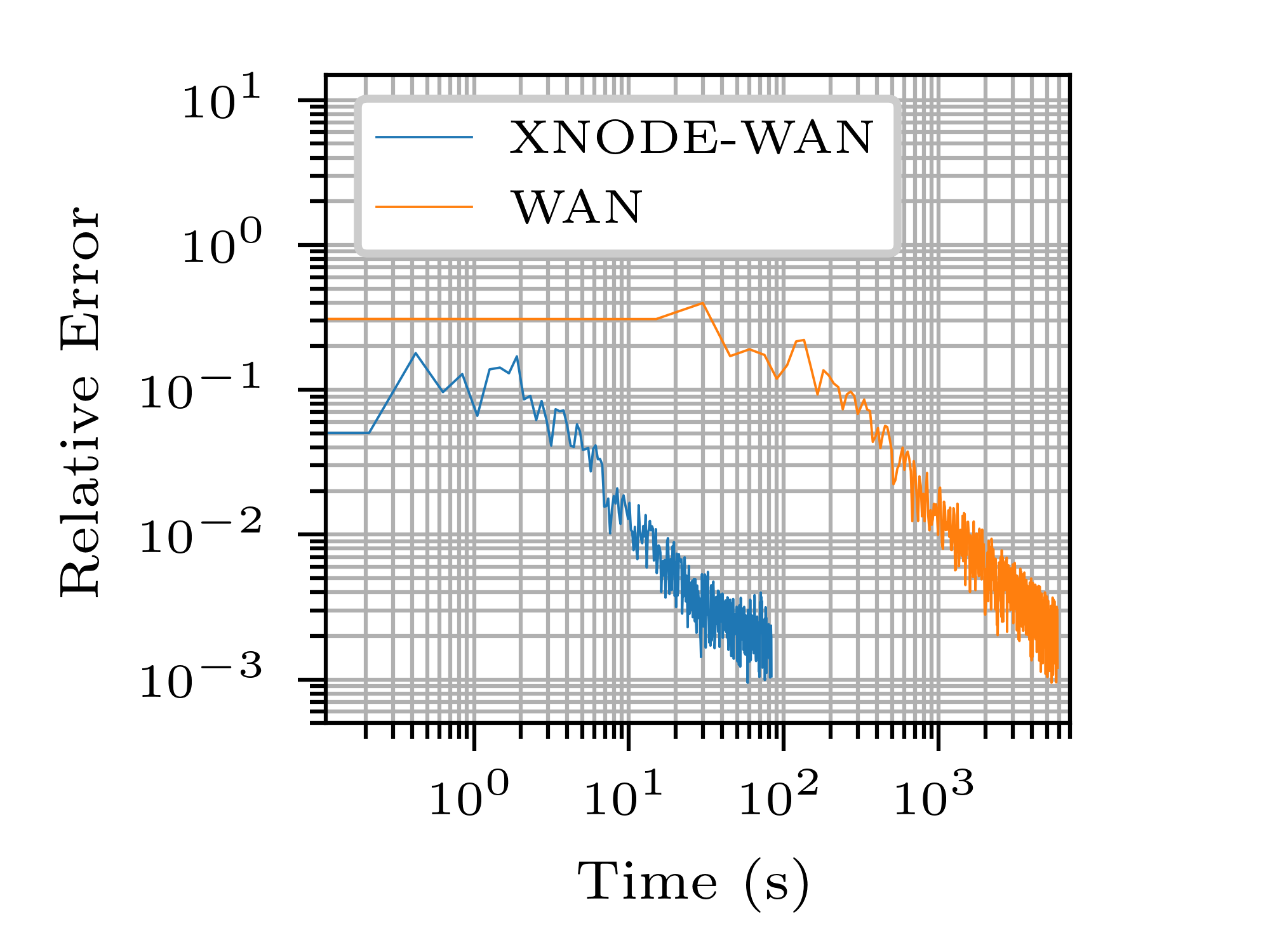}
\subcaption{The relative error over time}
\label{fig: sol_training_loss1}
   \end{subfigure}
     \begin{subfigure}[b]{0.65\textwidth}
\includegraphics[width=\textwidth]{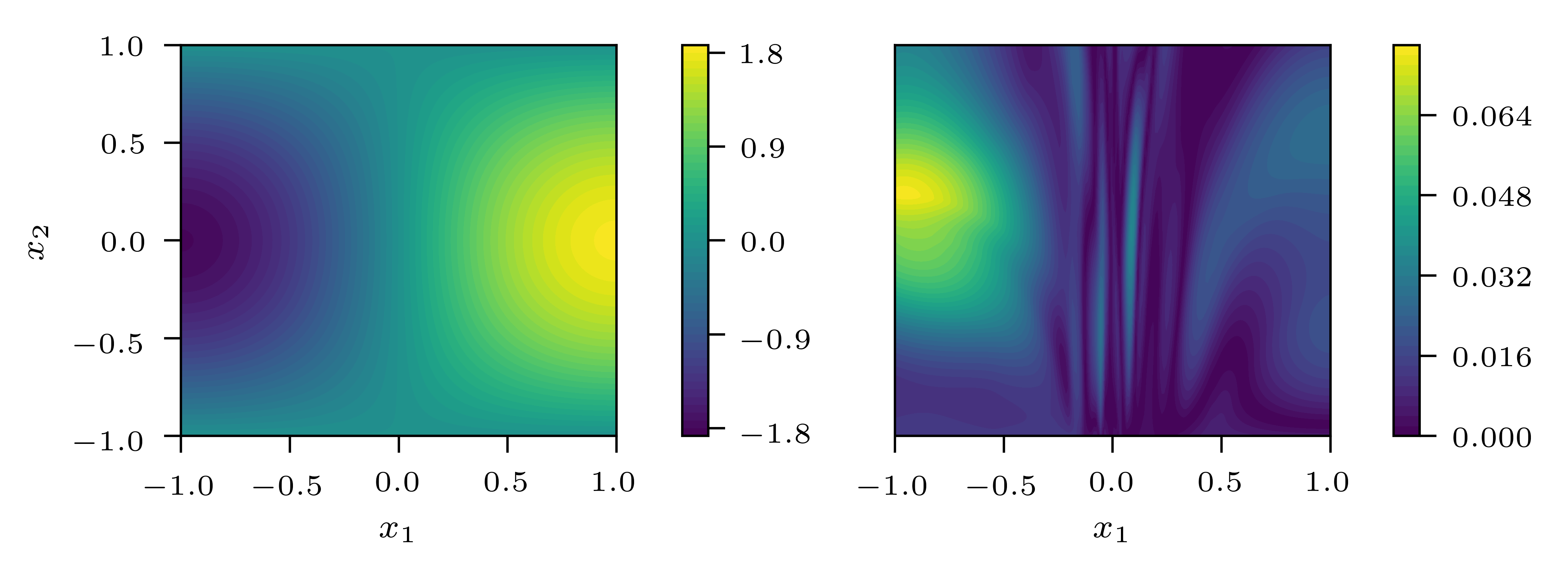}
\subcaption{XNODE-WAN solution and its error at convergence (74s)}\label{fig: sol_training_loss2}
   \end{subfigure} \\
     \begin{subfigure}[b]{0.34\textwidth}
\includegraphics[width=\textwidth]{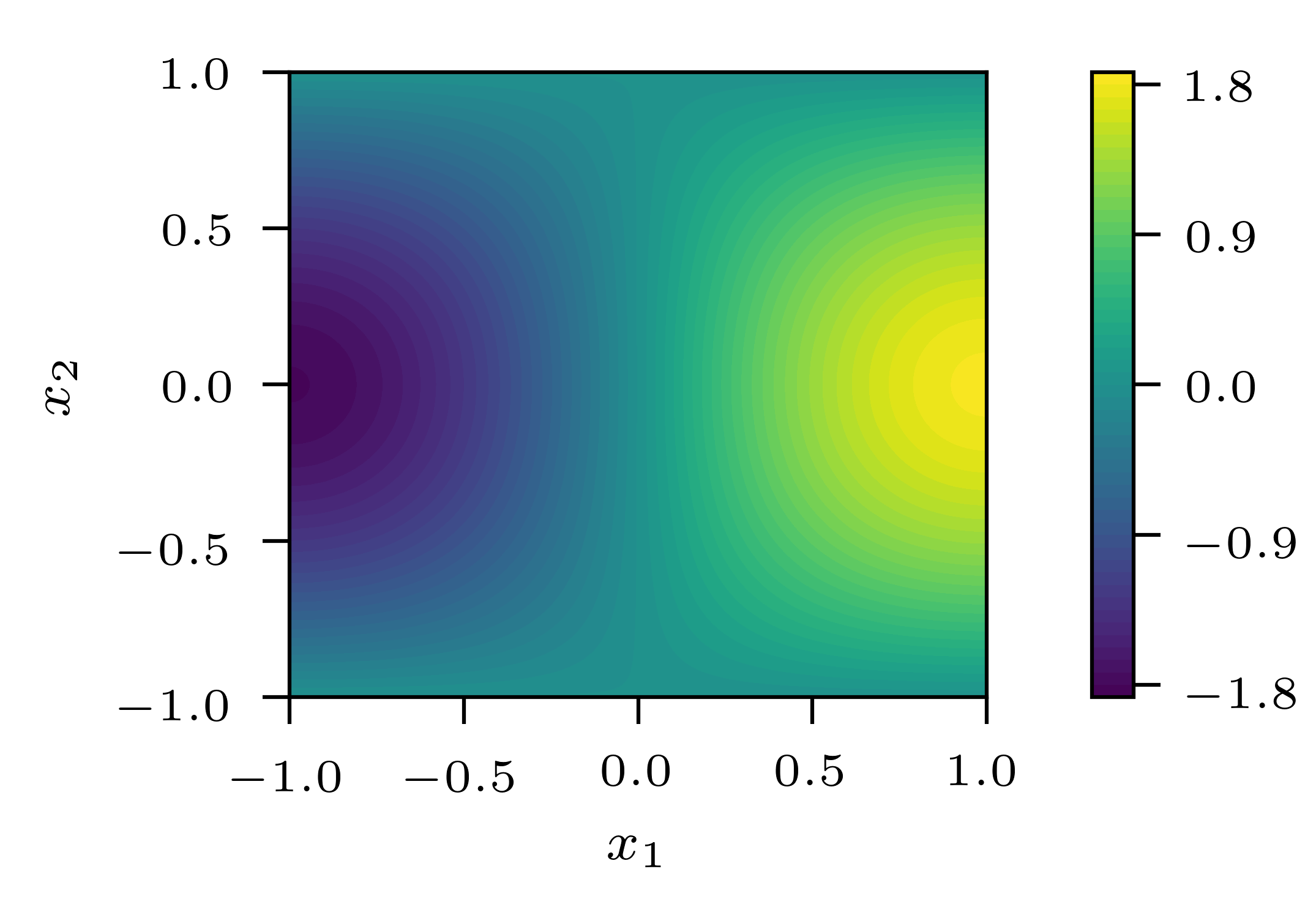}
\subcaption{True solution $u(T, \mathbf{x})$}\label{fig: sol_training_loss3}
   \end{subfigure}
     \begin{subfigure}[b]{0.65\textwidth}
\includegraphics[width=\textwidth]{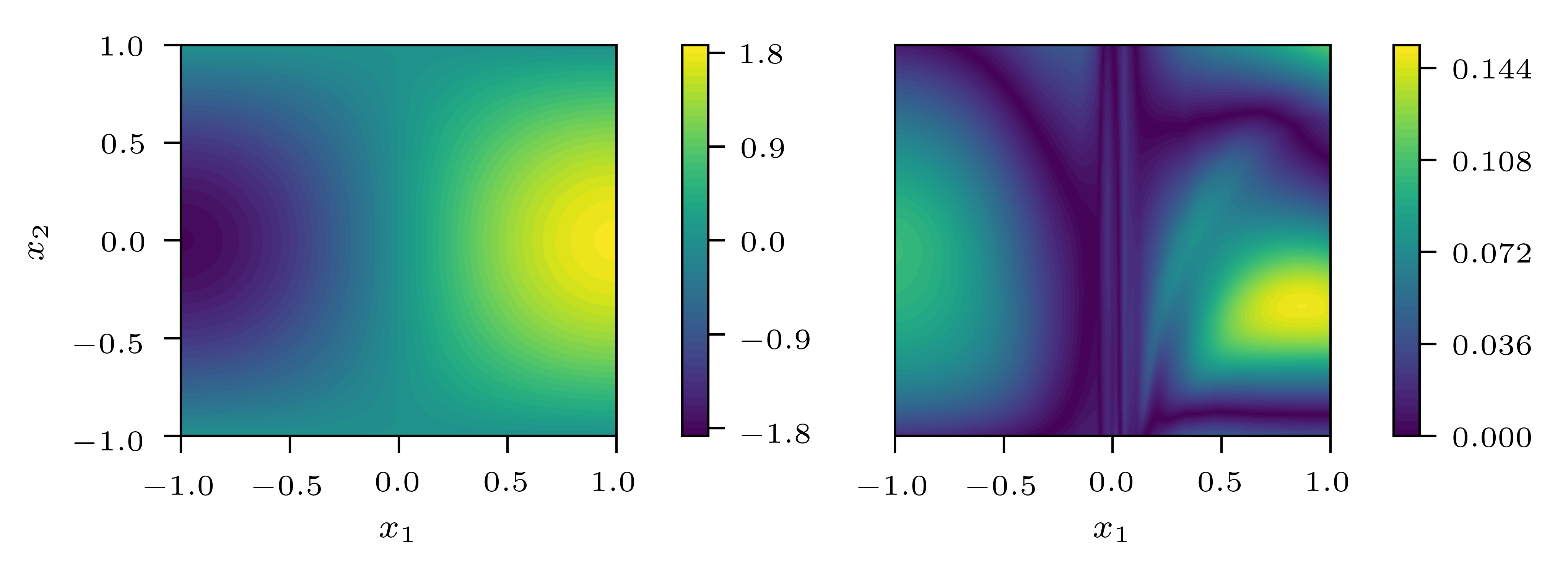}
\subcaption{WAN solution and its error at convergence (5865s)}\label{fig: sol_training_loss4}
   \end{subfigure}\\   
      \begin{subfigure}[b]{0.31\textwidth}
      ~
      \end{subfigure}
       \quad 
       \begin{subfigure}[b]{0.65\textwidth}
\includegraphics[width=\textwidth]{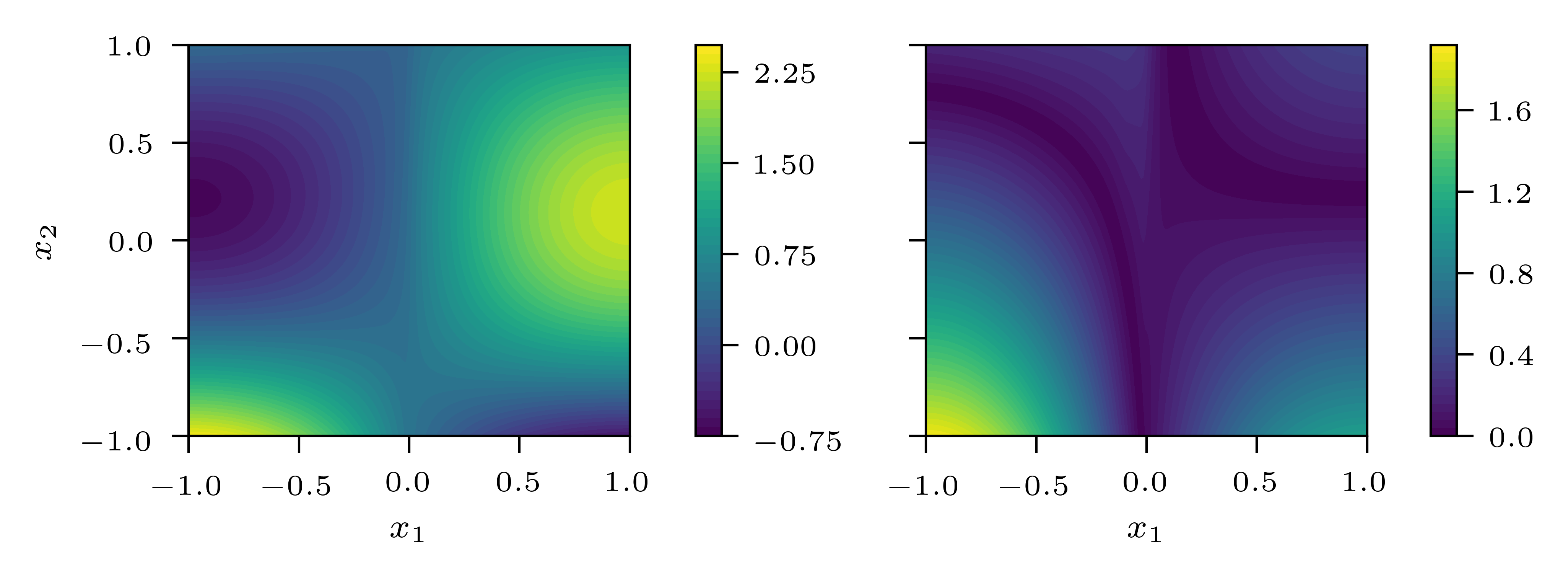}
\subcaption{WAN solution and its error at 74s}\label{fig: sol_training_loss5}
   \end{subfigure}
\caption{
Performance comparison between XNODE-WAN and WAN in \Cref{ex1}. In the right subplots of (b), (d) and (e), the error is the poitwise absolute error $|u(T,  \mathbf{x}) - \hat{u}(T, \mathbf{x})|$, where $\hat{u}$ is the model estimated solution. All the images (b)-(e) only show 2D slice of $x_3 = x_4 = x_5 = 0$ as the true solution $u$ of this example only depends on the first two coordinates. 
}\label{fig: sol_training_loss}
\end{figure}
\begin{example}\label{ex2}
We consider the same problem as in \Cref{ex1} but on a $5$-dimensional hypersphere domain, i.e., $\mathcal{D} = [0,1]\times B(\mathbf{x}_c, 0.5)$, where $\mathbf{x}_c=[0.5,0.5,0.5,0.5,0.5]^T$. The exact solution is given in \eqref{eqn:eg1_sol} on $\mathcal{D}$.  
\end{example}

We again implement both models and the results is shown in \Cref{fig:circle_ex} with the same stopping criteria, i.e., the relative $L^2$ error $\epsilon =10^{-3}$. The evolution of the relative $L^2$ error for the hypersphere domain is similar to that in \Cref{ex1}.
In particular, the XNODE-WAN model achieves the targeted error tolerance within 100s while it takes longer than 3,000s for WAN to reach the stopping criteria.

This example justifies the advantage of using XNODE-WAN for solving parabolic PDE with an irregular spatial domain $\Omega$.

\begin{figure}[h!]
    \begin{subfigure}[b]{0.35\textwidth}
    \includegraphics[width=\textwidth]{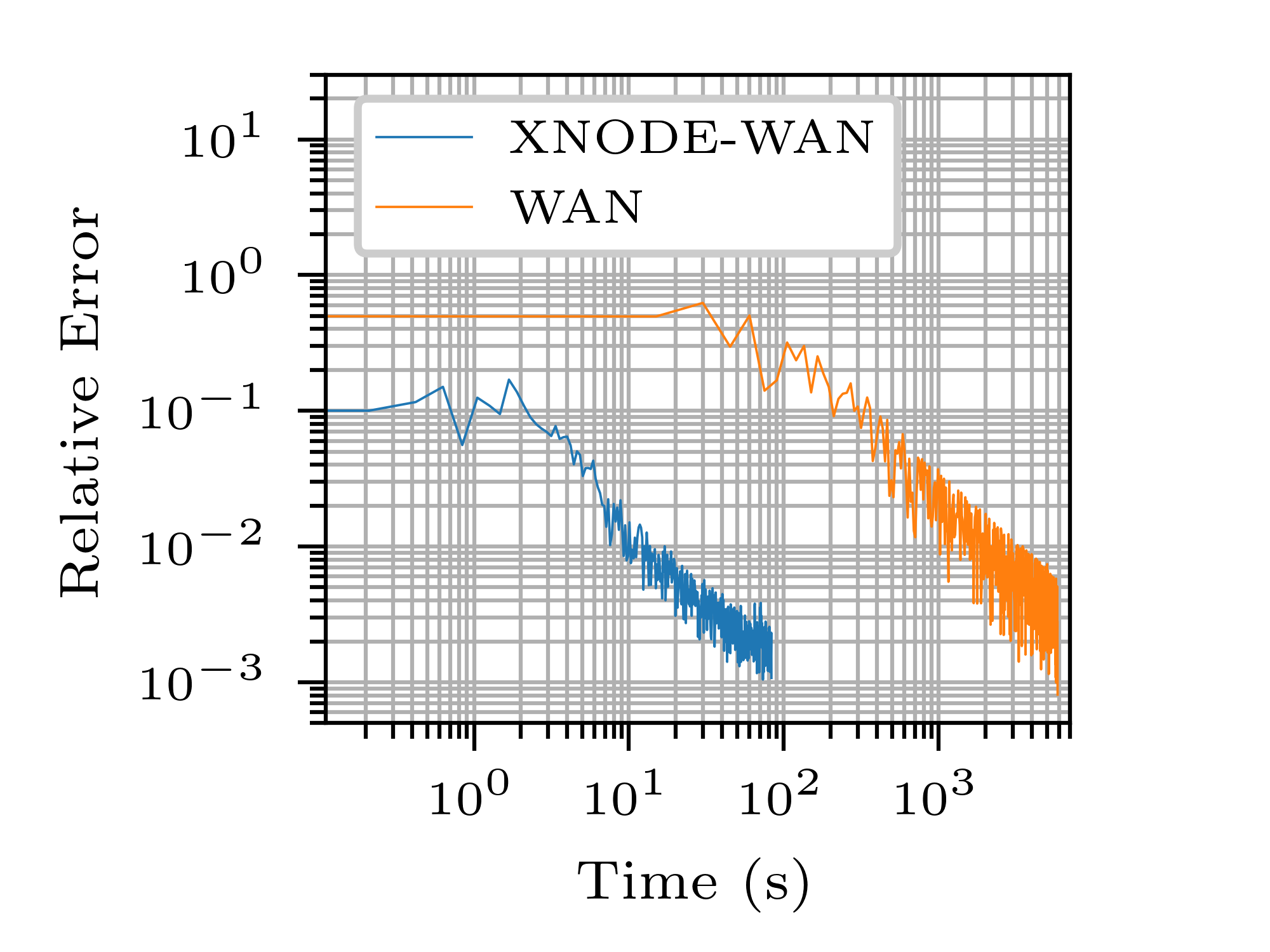}
    \subcaption{The relative error over time} \label{fig:circle_ex1}
    \end{subfigure}\hspace{0.0cm}
\begin{subfigure}[b]{0.64\textwidth}
    \includegraphics[width=\textwidth]{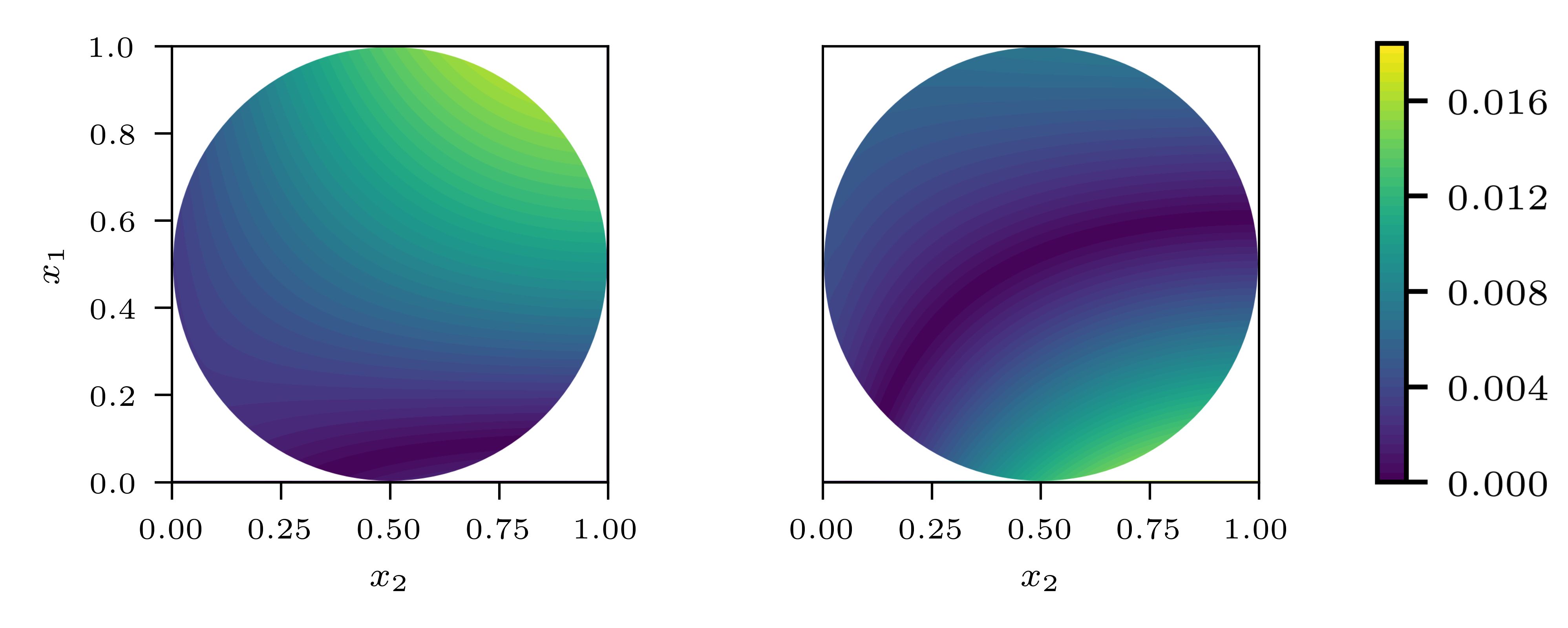}
    \subcaption{Error at convergence for XNODE-WAN (left) and WAN (right)} \label{fig:circle_ex2}
    \end{subfigure}
    \caption{
    Performance comparison of XNODE-WAN and WAN in \Cref{ex2}, where $\Omega$ is a $5$ dimensional unit ball. (a) the evolution of relative $L^2$ error of both XNODE-WAN and WAN method over time; (b) The plot of poitwise absolute error $|u(T,  \mathbf{x}) - \hat{u}(T,  \mathbf{x})|$, where $\hat{u}$ is the trained model of each method until hitting the stopping criteria. All the images only show 2D slice of $x_3 = x_4 = x_5 = 0$ as the true solution $u$ of this example only depends on the first two coordinates.    
    }
    \label{fig:circle_ex}
\end{figure}

\subsection{Scalability analysis}\label{sec:scalability}
In this subsection, we investigate the scalability of the proposed XNODE-WAN model with respect to the spatial dimension.
\begin{example}\label{Ex_sensitivity}
We consider Eq. \eqref{eqn:problem} on different spatial dimension $d \in \{4, 8, 16, 32, 64\}$, where $f, g$ are defined on $  [0, 1]\times\Omega$ and $h$ are defined on $\Omega$ as follows:
\begin{align}\label{eqn:fgh_scalability}
\begin{cases}
f(t,\mathbf{x}) = \frac{2e}{1-e}\left(\frac{\pi}{2}\right)^d\Big(\left(\frac{ \pi}{2} - 2\right) e^{-t} \Pi_{i=1}^d \sin\left(\frac{\pi}{2}x_i + \frac{\pi i}{2}\right) - 4 e^{-2t} \Pi_{i=1}^d \sin^2\left(\frac{\pi}{2}x_i + \frac{\pi i}{2}\right)\Big),&\\
g(t,\mathbf{x}) = \frac{2e}{1-e}\left(\frac{\pi}{2}\right)^d 2 e^{-t} \Pi_{i=1}^d \sin\left(\frac{\pi}{2}x_i + \frac{\pi i}{2}\right),&\\
h(\mathbf{x}) = \frac{2e}{1-e}\left(\frac{\pi}{2}\right)^d 2 \Pi_{i=1}^d \sin\left(\frac{\pi}{2}x_i + \frac{\pi i}{2}\right).&
\end{cases}\end{align}
It is easy to verify that the solution to \eqref{eqn:problem} is given by
$$u(t,\mathbf{x}) = \left(\frac{\pi}{2}\right)^d 2 e^{-t} \Pi_{i=1}^d \sin\left(\frac{\pi}{2}x_i + \frac{\pi i}{2}\right).
$$ 
\end{example}

Note in \cref{Ex_sensitivity}, $||u||_{L^2}=1$ regardless of the problem dimension $d$. For both methods, we choose the number of points $N_r, N_b = 800d$ and $\epsilon = 10^{-2}$. 

We investigate the performance of our proposed XNODE-WAN method with different spatial dimension $d$ in terms of the relative error evolution over the training, total training time and training time per epoch, which is visualized in \Cref{fig:scalability}.  \Cref{fig:scalability1} shows that even for a high dimensional $d = 64$, our method is able to converge to the relative error tolerance $\epsilon = 10^{-2}$ within $10^{2}$ seconds. %

Shown in \Cref{fig:scalability2}, for a dimension as high as $64$, to acheive the error tolerance $10^{-2}$, the training time of WAN is 4,000s, roughly 40 times of that of XNODE-WAN. It appears that, as $N_r$ and $N_b$ are chosen proportional to $d$, the average training time increases approximately linearly in dimension $d$ for XNODE-WAN. Clearly for the same dimension $d$, the training time per epoch from WAN is longer than that from XNODE-WAN, especially for large values of $d$. In total, \Cref{fig:scalability} show that XNODE-WAN has greater potential in scalability for higher dimensional PDEs. 

\begin{figure}[H]
\begin{subfigure}[b]{0.34\textwidth}
\includegraphics[width=\textwidth]{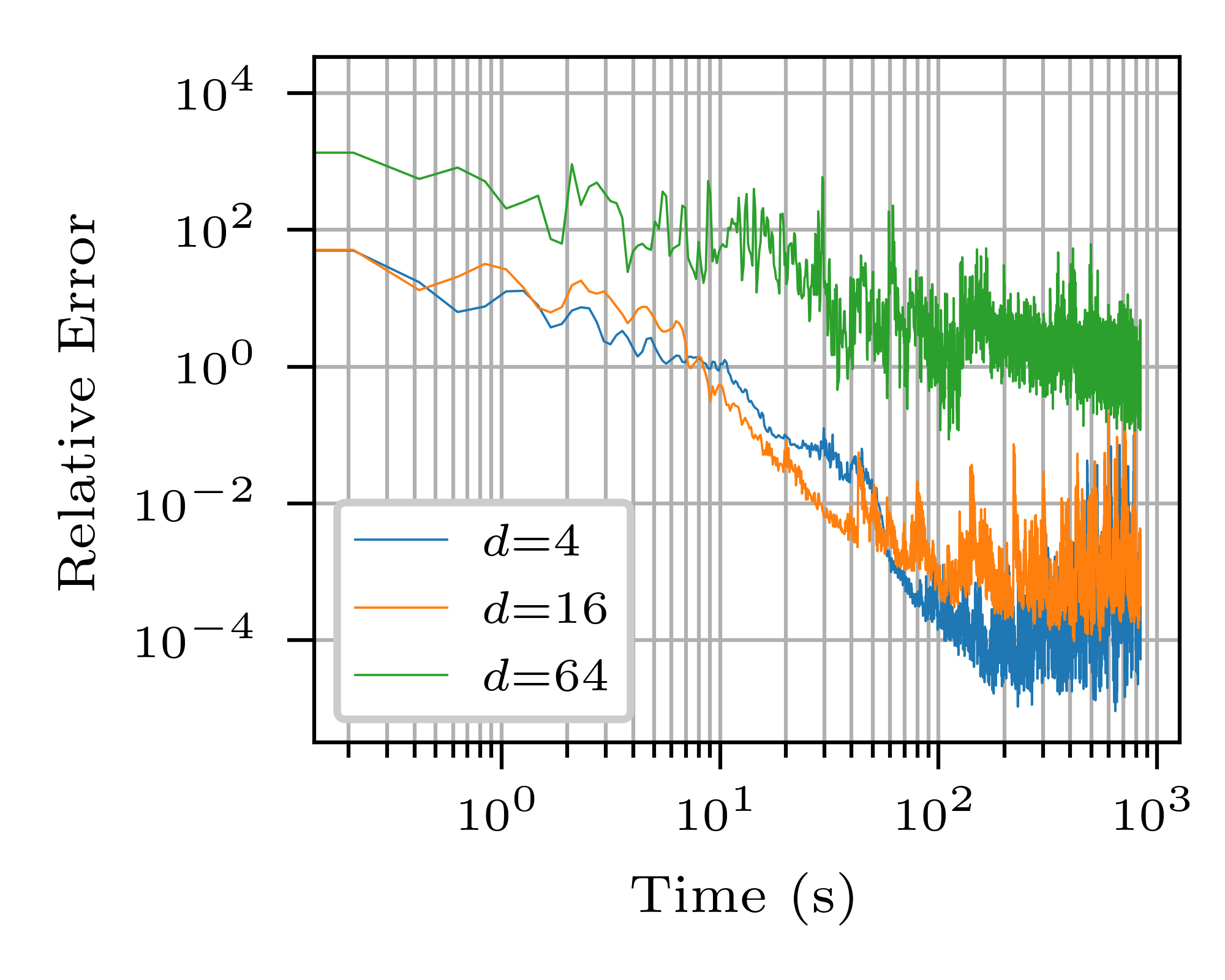}
\subcaption{The relative error over time}\label{fig:scalability1}
\end{subfigure}\hspace{0.0cm}
\begin{subfigure}[b]{0.65\textwidth}
\includegraphics[width=\textwidth]{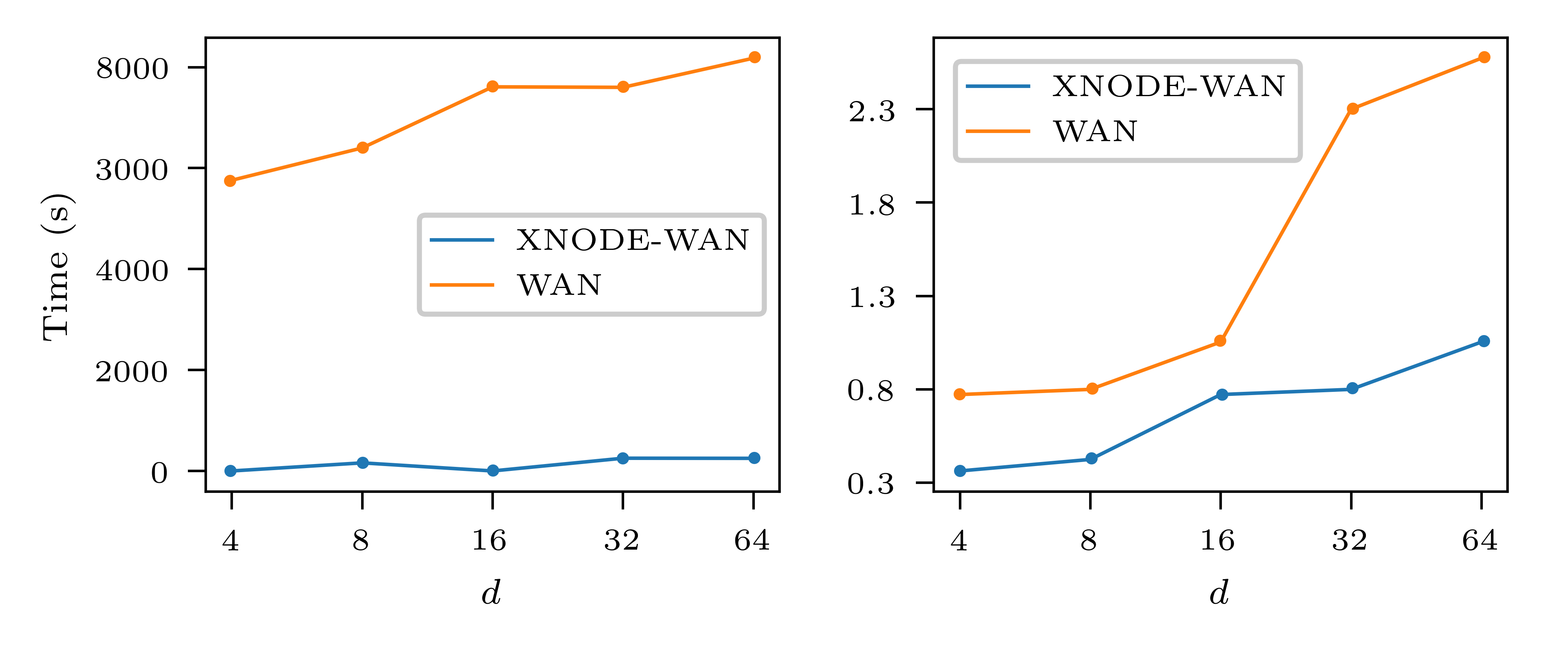}
\subcaption{Total computation time and training time per epoch}\label{fig:scalability2}
\end{subfigure}
    \caption{Scalability results of \Cref{Ex_sensitivity} in terms of spatial dimension $d$. (a) the evolution of relative $L^2$ error of the XNODE-WAN method over time ($0$ to $10^3$s) for $d \in \{4, 16, 64\}$; (b) the computational time costs (Left) and average training time per epoch (Right) of XNODE-WAN and WAN until they achieve error tolerance respectively. Here we use $\epsilon=10^{-2}$ for each spatial dimension $d \in \{4, 8, 16, 32, 64\}$.
    }
    \label{fig:scalability}
\end{figure}

\subsection{Sensitivity analysis}\label{sec:sensitivity}
In this subsection, we aim to investigate the sensitivity of our proposed XNODE model in terms of the network architecture of the hidden field $\mathcal{N}^{\text{vec}}_{\theta_2}$ and learning rate $\tau_\theta$. Here we choose the same PDE setting defined in Section \ref{sec:scalability} and set $d = 5$. We follow \cite{zang2020weak} to choose the error tolerance level $\epsilon = 0.005$.

To investigate the effects of the neural network architecture of the hidden field $\mathcal{N}^{\text{vec}}_{\theta_2}$ on the predictive performance of the XNODE-WAN, we consider different number of neurons per layer (hidden dimension) and number of layers (depth). When the hidden dimension is high as $18$ or $22$, our method can achieve the satisfactory results. It can be observed, when increasing the depth to $7$ or above, the predictive performance of our method is improving. It suggests that our method usually works well as long as the neural network architecture is sufficiently rich. However, the over-complicated network should be discouraged as it may prolong the training time and cause the overfitting.

\begin{figure}[H]
\begin{subfigure}[b]{0.36\textwidth}
\includegraphics[width=\textwidth]{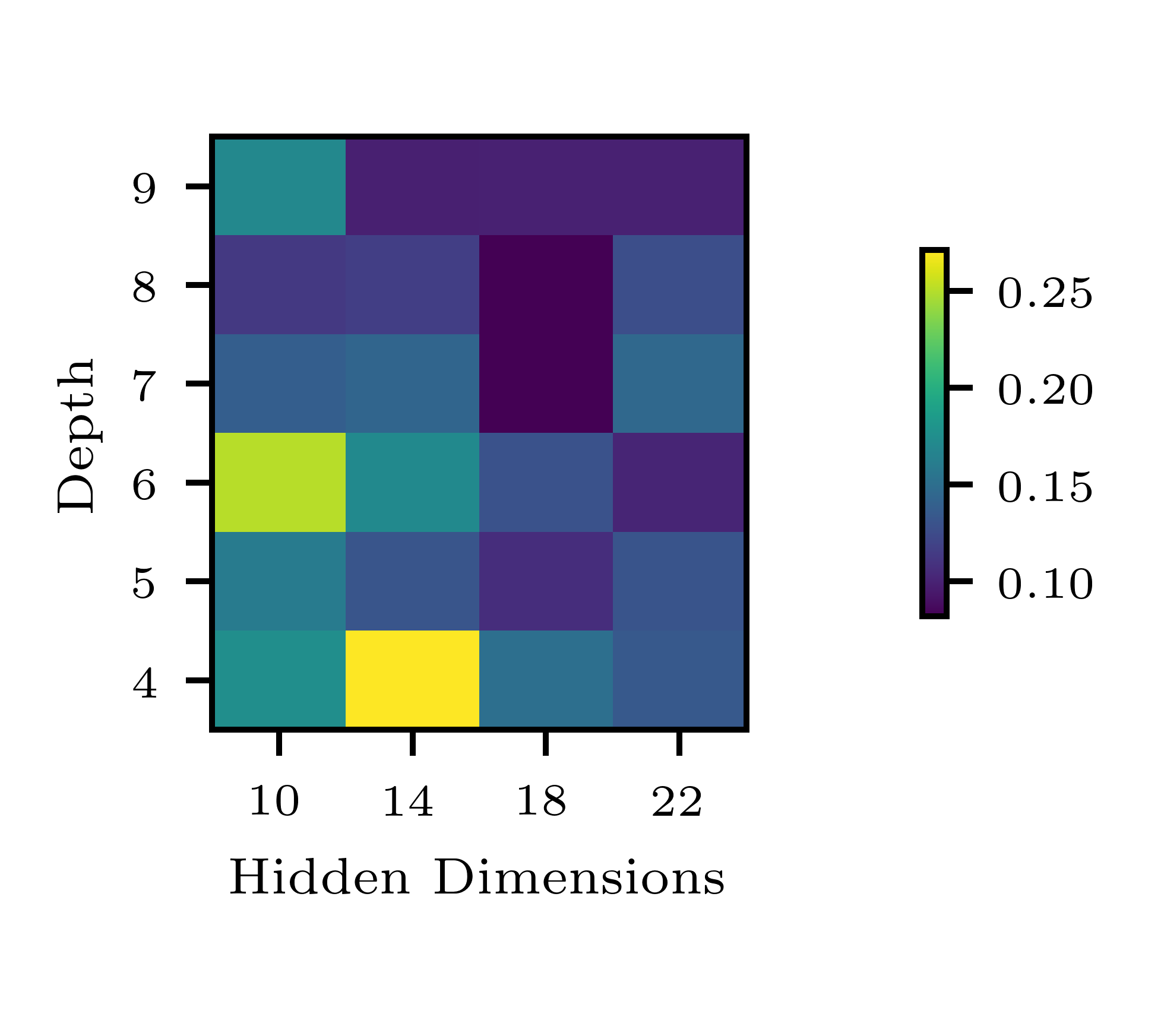}
\subcaption{XNODE-WAN: depths against hidden dimensions}\label{fig:sensitivity1}
\end{subfigure}\hspace{0.1cm}
\begin{subfigure}[b]{0.63\textwidth}
\includegraphics[width=\textwidth]{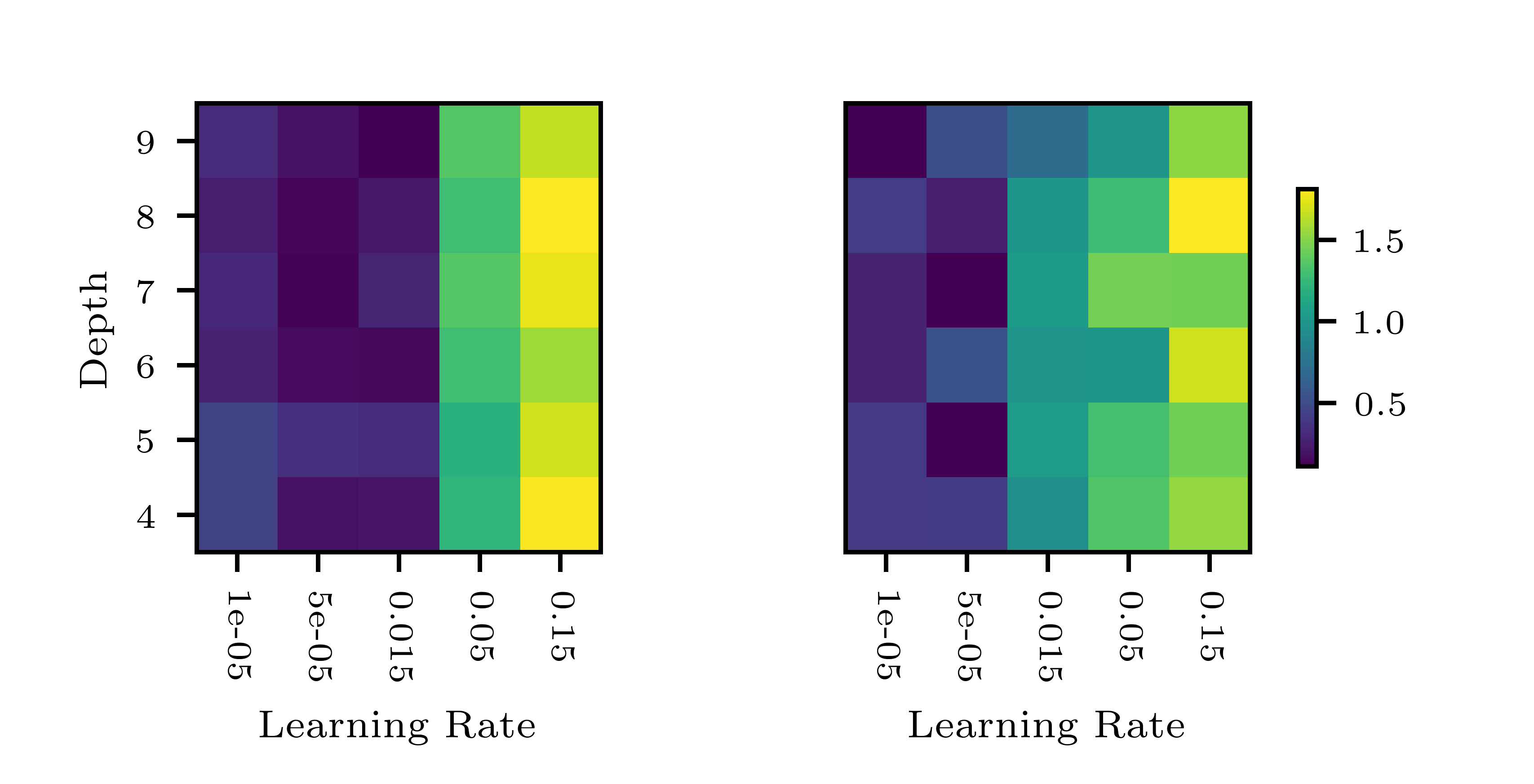}
\subcaption{XNODE-WAN (right) and WAN (left): the heatmap of relative error against the network depth and learning rate}\label{fig:sensitivity2}
\end{subfigure}\hspace{0.2cm}
    \caption{The relative error plots for sensitivity analysis. 
    All these experiments are implemented under the same setting of hyperparameters as in the Section \ref{sec:scalability}.}
    \label{fig:sensitivity}
\end{figure}

Furthermore, we consider different values of learning rate for both our method and WAN, and the corresponding performance. Fig. \ref{fig:sensitivity2} shows that, independently of the depth of the hidden field, a learning rate on the magnitude of $10^{-5}$ is needed to achieve a reasonable loss for WAN while XNODE-WAN allows for a much wider range of learning rate, i.e., from $10^{-5}$ to $10^{-2}$. In practice, we conduct the grid search to select the optimal learning rate for each method, i.e. to achieve the satisfactory accuracy, the learning rate is chosen to be the one corresponding to the least training time. 
That is the justification of different learning rate used for XNODE-WAN and WAN.

\subsection{Time-varying domain}\label{sec:num_timevarying}
\begin{example}\label{ex3}
We now consider the PDE problem on the time-varying domain $\mathcal{D} \subset [0,1] \times[-1,1]$ defined as follows (see \Cref{fig:time-varying1} for its shape),
\begin{eqnarray*}
\mathcal{D} &= \left\{(t,x_1)| t \in [0, 0.5], |x_1 -0.5|\leq 0.5 (1-t) \right\} \\
&\bigcup \left\{(t,x_1) | t \in [0.5, 1], |x_1 -0.5|\leq 0.5 t \right\}.
\end{eqnarray*}
 Note that to illustrate the idea in this example we only consider $d=1$. 
 \end{example}

\begin{figure}[h]
\centering
\begin{subfigure}[b]{0.35\textwidth}
\includegraphics[width=\textwidth]{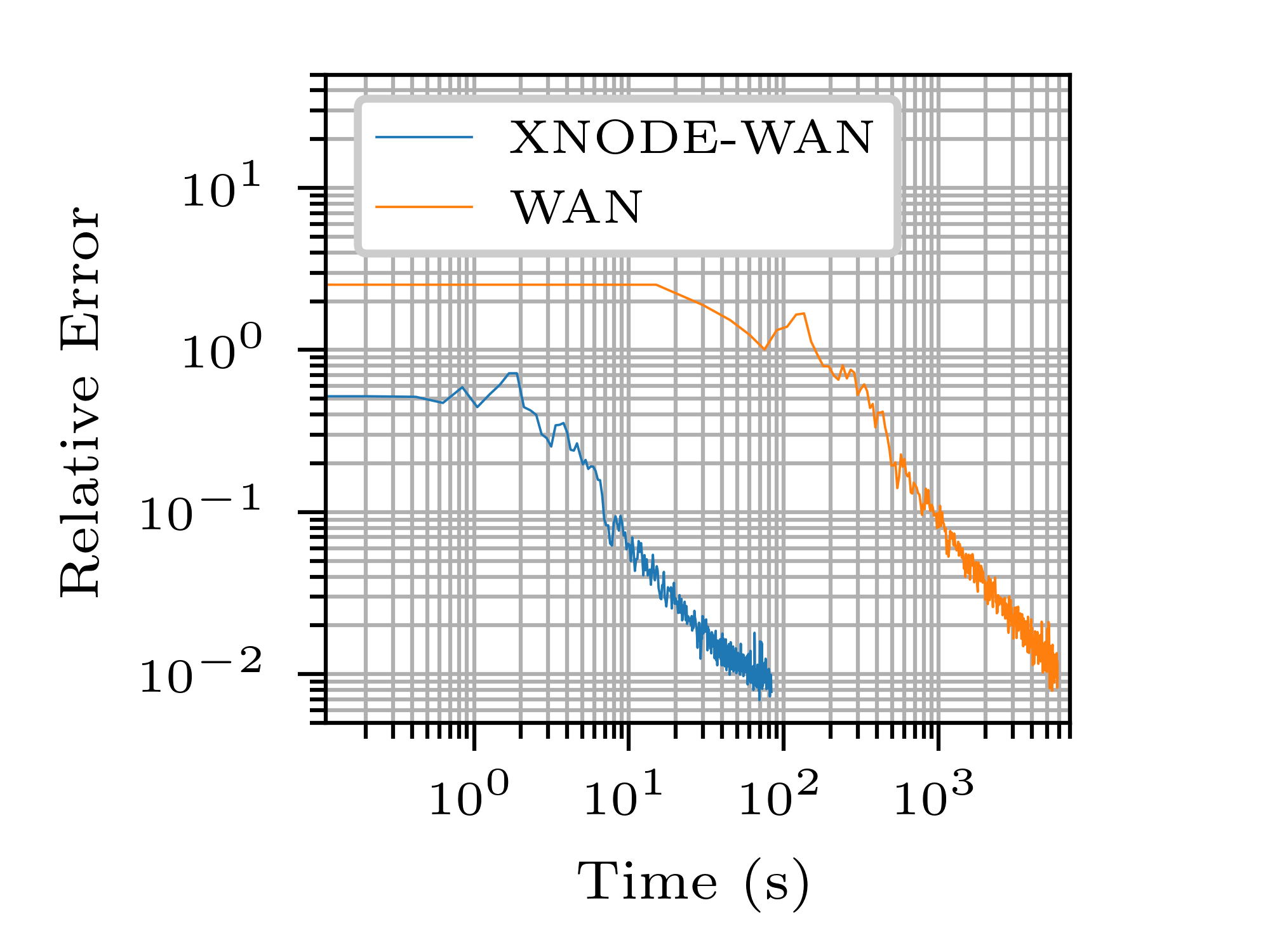}
\subcaption{The relative error over time }\label{fig:time-varying1}
\end{subfigure}
\begin{subfigure}[b]{0.64\textwidth}
\includegraphics[width=\textwidth]{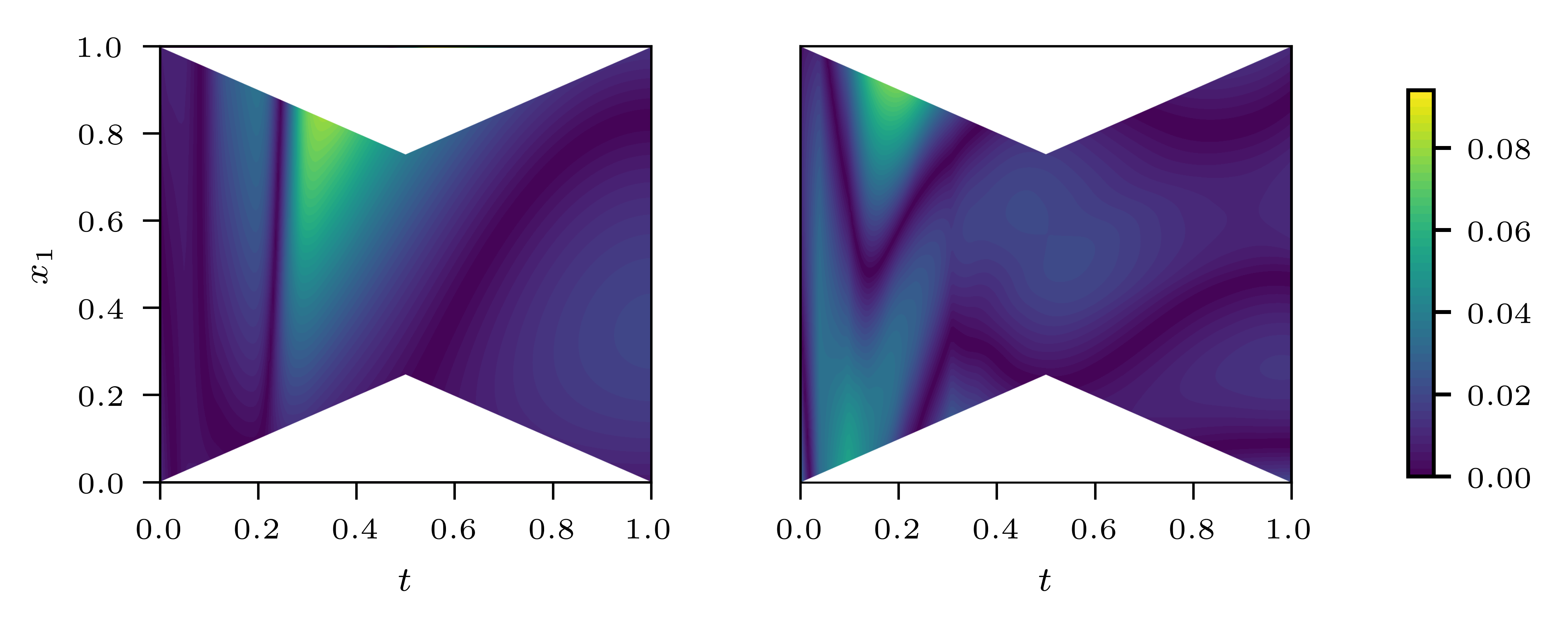}
\subcaption{Error at convergence for XNODE-WAN and WAN solutions}\label{fig:time-varying2}
\end{subfigure}
\caption{Performance comparison of XNODE-WAN and WAN on an time-dependent domain in \Cref{ex3}. (a) the evolution of relative $L^2$ error of both XNODE-WAN and WAN method over time; (b) The plot of poitwise absolute error $|u(t, x_1) - \hat{u}(t, x_1)|$, where $\hat{u}$ is the trained model of each method until hitting the stopping criteria.}
\label{fig:time-varying}
\end{figure}

In this example, we choose the error tolerance level $\epsilon=10^{-2}$ to train both XNODE-WAN and WAN models. The performance comparison results are shown in \Cref{fig:time-varying}. In particular, from \Cref{fig:time-varying2} we observe that both models are able to learn the solution on the time-varying domain to a satisfactory level. The heatmap plot of Fig. \ref{fig:time-varying2} shows that they both achieve an error below 0.04 for most of the region at their convergence. However, as shown in Fig.\ref{fig:time-varying1}, similarly to the time-independent case, the convergence of XNODE-WAN in terms of the relative error is much faster than that of WAN, which is reflected by the enlarged relative error gap between two models over time until the end of XNODE-WAN training. 





To solve the PDE problem on the time-varying domain, both WAN and XNODE-WAN may require more computational time to train compared with their time-independent counterpart as expected. It is evident by that the computational cost of each model on low-dimensional time varying domain example (\Cref{ex3}) is even higher than that for the higher dimensional time-invariant domain (see Fig. \ref{fig: sol_training_loss1}). For our method, it is mainly a result of additional complexity of constant sub-path construction in Algorithm \ref{NODE-timevary}.

However, more importantly, for this time-varying domain example, our proposed model consistently outperforms in terms of efficiency: WAN requires significantly longer time (5,000s) to converge compared to XNODE-WAN (less than 100s) in Fig. \ref{fig:time-varying1}. In part this can be explained by the fact that XNODE-WAN model incorporates the boundary condition to the model (Eq. \eqref{Time_varying_eqn}) and its weak solution estimators evaluated at the entry points on the boundary have little error. This anchors our model at more places than the WAN model, making it more suited to a time-varying domain. This example justifies the advantage of using XNODE-WAN for solving PDE on an time-varying domain.


\section{Conclusion}\label{sec: Conclusion}
In this paper, we propose the novel XNODE model as a universal and effective network for the parabolic PDE solution by leveraging the priori information of the PDE though the NODEs. To tackle the high dimensional parabolic PDE problems on arbitrary bounded domain, incorporating the XNODE model to the WAN as a primal network leads to significantly faster training and better scalability. Our XNODE-WAN method consistently outperforms WAN in terms of computational time on both time-independent domain and time-dependent domain. 

\section*{Acknowledgements}
Funding: HN and YW are supported by the Engineering and Physical Sciences Research Council (EPSRC) [grant number EP/S026347/1] and the Alan Turing Institute under the EPSRC grant [grant number EP/N510129/1]. \newpage

\appendix

\section{Universality of XNODE model}
\label{appendix:univerality}

Let us recall the definition of XNODE model, which is a map from an input $\mathbf{x} \in \mathbb{R}^d$ to the output $o = (o_{t})_{t \in [0, T]} \in \mathcal{C}([0, T], \mathbb{R})$ through the $\mathbb{R}^{h}$-valued latent process $\mathbf{h} = (\mathbf{h}(t))_{t \in [0, T]}$, i.e. for every $ t \in [0, T]$,
 \begin{eqnarray}
 \begin{cases}
 \frac{\mathrm{d}\mathbf{h}(t)}{\mathrm{d}t} = \mathcal{N}^{\text{vec}}_{\theta_1}(\mathbf{h}(t), t, x), \quad \mathbf{h}(0) = \mathcal{N}^{\text{init}}_{\theta_{2}}(h(x)) \in \mathbb{R}^h.& \\
o_{t} = \mathcal{L}_{\tilde{\theta}}(\mathbf{h}(t)). &
 \end{cases} \label{xODE-eqn}
 \end{eqnarray}
where $\mathcal{N}^{\text{vec}}_{\theta_1}$ and $\mathcal{N}^{\text{init}}_{\theta_{2}}$ are neural networks fully parameterized by $\theta_{1}$ and $\theta_2$ for the vector fields and initial condition of the hidden neural $\mathbf{h}$ respectively, and $\mathcal{L}_{\tilde{\theta}}$ is a linear trainable layer with the weights $\tilde{\theta}$.

We will establish the universal approximation theorem of the proposed XNODE model for the parabolic PDE solution. Our proof mainly replies on the stability estimates of the ODE theory.

\begin{theorem}[Theorem 2.8 in \cite{teschl2012ordinary}]\label{ODE_stability_thm}
Suppose $V_{1}, V_2 \in \mathcal{C}(U,\mathbb{R}^n)$ where $U$ is an compact subset in $[0,T] \times \mathbb{R}^{n}$ and $V_{1}$ is Lipschitz  continuous in the first argument with 
\begin{equation*}
    L = \sup_{\overset{(\mathbf{y}_1, t),(\mathbf{y}_2,t) \in U}{\mathbf{y}_1 \neq \mathbf{y}_2 }}\frac{|V_1( \mathbf{y}_1,t) - V_1(\mathbf{y}_2,t)|}
{|\mathbf{y}_1 - \mathbf{y}_2|}.
\end{equation*}
If $F_1$ and $F_{2}$ are the solutions to the two initial value problems defined as follows:
\begin{eqnarray}
&&F_1'(t) = V_1\big(F_1(t), t\big),\quad F_1(0) = f_1;\\
&&F_2'(t) = V_2\big(F_2(t), t\big),\quad F_2(0) = f_2.
\end{eqnarray}
Then for all $t \in [0, T]$,
\begin{eqnarray*}
|F_1(t) - F_2(t)| \leq |f_1 - f_2|\exp(Lt) + \frac{M}{L}\big(\exp(Lt) -1\big),
\end{eqnarray*}
where
\begin{eqnarray*}
M = \sup_{(\mathbf{y},t) \in U} |V_1( \mathbf{y}, t) - V_2(\mathbf{y}, t)|.
\end{eqnarray*}
\end{theorem}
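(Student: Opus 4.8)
The plan is to reduce the estimate to an application of Grönwall's inequality in integral form. First I would rewrite both initial value problems in their equivalent integral form, $F_i(t) = f_i + \int_0^t V_i\bigl(F_i(s),s\bigr)\,\mathrm{d}s$ for $i=1,2$, which is legitimate because each $V_i$ is continuous and $F_1,F_2$ are assumed to be solutions on all of $[0,T]$ with time-augmented graphs contained in the compact set $U$. Subtracting the two identities and inserting the intermediate term $V_1\bigl(F_2(s),s\bigr)$ gives
\begin{equation*}
F_1(t) - F_2(t) = (f_1 - f_2) + \int_0^t \bigl(V_1(F_1(s),s) - V_1(F_2(s),s)\bigr)\,\mathrm{d}s + \int_0^t \bigl(V_1(F_2(s),s) - V_2(F_2(s),s)\bigr)\,\mathrm{d}s .
\end{equation*}
Taking norms, bounding the first integrand by $L\,|F_1(s)-F_2(s)|$ via the Lipschitz hypothesis on $V_1$ and the second integrand by $M$ via the definition of $M$, I obtain the scalar integral inequality
\begin{equation*}
\phi(t) \le a(t) + L \int_0^t \phi(s)\,\mathrm{d}s, \qquad \phi(t) := |F_1(t) - F_2(t)|, \quad a(t) := |f_1 - f_2| + M t .
\end{equation*}

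Next I would invoke the integral (generalised) form of Grönwall's inequality, which yields $\phi(t) \le a(t) + L \int_0^t a(s)\, e^{L(t-s)}\,\mathrm{d}s$. Substituting $a(s) = |f_1 - f_2| + Ms$ and evaluating the two elementary integrals $\int_0^t e^{L(t-s)}\,\mathrm{d}s = \tfrac{1}{L}(e^{Lt}-1)$ and $\int_0^t s\, e^{L(t-s)}\,\mathrm{d}s$ (the latter via the change of variables $u = t-s$ followed by one integration by parts), the two $Mt$ contributions cancel and one is left with exactly $|f_1 - f_2|\exp(Lt) + \tfrac{M}{L}(\exp(Lt) - 1)$, which is the claimed bound.

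There is no deep obstacle here, as this is the classical continuous-dependence estimate for ODEs; the only points that need care are bookkeeping ones. The split must be performed with the intermediate term $V_1(F_2(s),s)$ rather than $V_2(F_1(s),s)$, so that only the Lipschitz regularity of $V_1$ -- the hypothesis actually at our disposal -- is invoked, while the difference $V_1 - V_2$ is controlled uniformly on $U$ by $M$. One should also keep in mind that the estimate is meaningful only on the interval where both solutions exist and their graphs remain in $U$; under the standing assumption that $F_1$ and $F_2$ solve the respective problems on all of $[0,T]$ this is automatic, and it is precisely what licenses the use of the global constants $L$ and $M$ attached to $U$. A purely differential version of the argument is also possible but would require handling the non-smoothness of $t \mapsto |F_1(t)-F_2(t)|$, so I would stay with the integral formulation.
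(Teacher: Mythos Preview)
Your argument is correct and is precisely the classical Gr\"onwall-based continuous-dependence estimate; the split via the intermediate term $V_1(F_2(s),s)$ is the right move, and your computation of the two integrals does indeed collapse to $|f_1-f_2|e^{Lt} + \tfrac{M}{L}(e^{Lt}-1)$.

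There is nothing to compare against in the paper: this theorem is not proved there but simply quoted as Theorem~2.8 from Teschl's textbook \cite{teschl2012ordinary}, and then invoked as a black box in the proof of Theorem~\ref{thm_universality}. Your write-up is exactly the standard textbook proof (and in particular matches Teschl's own argument), so in effect you have supplied what the paper deliberately outsourced to the reference.
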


Thanks to the universality of neural network and the above theorem, we now establish the universality of the XNODE model in the following theorem.

\begin{theorem}\label{thm_universality}
Let $\Omega \subset \mathbb{R}^d$ be a bounded domain and $F: [0, T] \times \Omega \rightarrow \mathbb{R}$
be a solution to the following ODE with the initial condition:
\begin{eqnarray}\label{ODE}
\frac{\mathrm{d}F(t,\mathbf{x})}{\mathrm{d}t} = V(F(t,\mathbf{x}), t; \mathbf{x}), \quad F(0,\mathbf{x}) = h(\mathbf{x}),
\end{eqnarray}
where $V$ is Lipschitz continuous in the first argument for any $\mathbf{x} \in \Omega$ and there exists a bounded constant $L>0$ such that 
\[
    L =  \sup_{\mathbf{x} \in \Omega}
  \sup_{\overset{(\mathbf{y}_1, t), (\mathbf{y}_2, t) \in U}{ \mathbf{y}_1 \neq \mathbf{y}_2}}\frac{|V( \mathbf{y}_1, t; \mathbf{x}) - V(\mathbf{y}_2, t;\mathbf{x})|}
{|\mathbf{y}_1 - \mathbf{y}_2|}.
\]
Then for any given error tolerance level $\epsilon >0$, there exists a XNODE solution $\tilde F$ defined in \eqref{xODE-eqn} such that 
\begin{eqnarray}\label{stability}
\sup_{(t, \mathbf{x}) \in  [0, T] \times \Omega}|F(t,\mathbf{x}) - \tilde F(t,\mathbf{x})| \leq \epsilon .
\end{eqnarray}
\end{theorem}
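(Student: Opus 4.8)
The plan is to use the universal approximation property of neural networks to construct the XNODE components so that the XNODE vector field is uniformly close to $V$ and the XNODE initial condition is uniformly close to $h$, and then invoke the ODE stability estimate (Theorem \ref{ODE_stability_thm}) to conclude that the resulting XNODE trajectory stays uniformly close to $F$ on $[0,T]\times\Omega$. The key quantitative input is the bound
\[
|F(t,\mathbf{x}) - \tilde F(t,\mathbf{x})| \le |f_1 - f_2|\exp(Lt) + \frac{M}{L}\big(\exp(Lt) - 1\big),
\]
applied for each fixed $\mathbf{x}\in\Omega$, where here $f_1 = h(\mathbf{x})$, $f_2 = (\mathcal{L}_{\tilde\theta}\circ\mathcal{N}^{\text{init}}_{\theta_2})(h(\mathbf{x}))$ after projecting the latent trajectory back to $\mathbb{R}$, and $M$ controls the sup-distance between $V$ and the (projected) XNODE vector field. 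Since $\exp(Lt) \le \exp(LT)$ uniformly in $t$, it suffices to make $|f_1-f_2|$ and $M$ both smaller than $\epsilon / \big(2\exp(LT)(1 + 1/L)\big)$, say, and then take the supremum over $\mathbf{x}\in\Omega$.

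First I would reduce to the scalar latent case $h = 1$, i.e. take $\mathbf{h}(t)\in\mathbb{R}$ and $\mathcal{L}_{\tilde\theta} = \mathrm{id}$, so that the XNODE latent process $\mathbf{h}$ directly models $F$; this is legitimate because the theorem only asserts \emph{existence} of an XNODE solution, so we are free to pick the simplest architecture. Then $\mathcal{N}^{\text{init}}_{\theta_2}$ needs only approximate the identity on the compact range $h(\Omega)\subset\mathbb{R}$ (or on $\Omega$, approximating $\mathbf{x}\mapsto h(\mathbf{x})$ directly) to within the tolerance above — possible by the classical universal approximation theorem since the target is continuous on a compact set. Next, $\mathcal{N}^{\text{vec}}_{\theta_1}(y,t,\mathbf{x})$ must approximate $V(y,t;\mathbf{x})$ uniformly on the relevant compact set $U' \subset \mathbb{R}\times[0,T]\times\Omega$; the continuity of $V$ on this compact set again gives such a neural network by universal approximation. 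Care is needed to choose $U'$ so that it contains all latent trajectories of the approximating XNODE — one can take an a priori bound on $\|F\|_\infty$ (finite since $F$ is continuous on the compact $[0,T]\times\Omega$) plus a fixed margin, and note that the stability estimate itself guarantees the XNODE trajectory cannot escape this enlarged set once $M$ and the initial error are small enough; alternatively one fixes a large closed ball first and then chooses the approximations, arguing the trajectories remain inside by a continuity/bootstrap argument.

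The step I expect to be the main obstacle is the mild circularity in the domain of approximation: to apply Theorem \ref{ODE_stability_thm} one needs $\mathcal{N}^{\text{vec}}_{\theta_1}$ to be close to $V$ on a compact set $U'$ that \emph{contains the XNODE trajectory}, but the trajectory is only known after the network is chosen. The clean way to resolve this is to fix $R := \|F\|_{L^\infty([0,T]\times\Omega)} + 1$, set $U' := \{(y,t,\mathbf{x}) : |y|\le R,\ t\in[0,T],\ \mathbf{x}\in\Omega\}$ (closure thereof, compact), choose the neural networks to approximate $V$ on $U'$ and $h$ on $\Omega$ within the stated tolerance, and then run the stability estimate: as long as the XNODE trajectory stays in $|y|\le R$ it obeys the bound $|F - \tilde F| \le \epsilon < 1$, so in fact $|\tilde F| \le \|F\|_\infty + 1 = R$, and a standard continuity argument (the first exit time from $\{|y| < R\}$ cannot be finite) shows the trajectory never leaves $U'$. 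One should also note the harmless point that Theorem \ref{ODE_stability_thm} is stated with $V_1$ Lipschitz; here we take $V_1 = V$ (Lipschitz by hypothesis, with constant $L$) and $V_2 = \mathcal{N}^{\text{vec}}_{\theta_1}$, so $F_1 = F$, $F_2 = \tilde F$, and the inequality is exactly what we want. Finally, taking the supremum over $(t,\mathbf{x})\in[0,T]\times\Omega$ of the resulting pointwise bound gives \eqref{stability}.
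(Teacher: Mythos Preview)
Your proposal is correct and follows essentially the same route as the paper: restrict to hidden dimension $1$ with $\mathcal{L}_{\tilde\theta}=\mathrm{id}$, invoke universal approximation to make $\mathcal{N}^{\text{vec}}_{\theta_1}$ and $\mathcal{N}^{\text{init}}_{\theta_2}$ uniformly close to $V$ and $h$, and then apply the stability estimate of Theorem~\ref{ODE_stability_thm} with $V_1=V$, $V_2=\mathcal{N}^{\text{vec}}_{\theta_1}$ and constant $C\sim\exp(LT)(1+1/L)$. Your treatment is in fact more careful than the paper's on one point: the paper takes the sup in the approximation step over unspecified $(\mathbf{y},t,\mathbf{x})$, whereas you explicitly fix the compact set $U'=\{|y|\le \|F\|_\infty+1\}\times[0,T]\times\overline{\Omega}$ and close the mild circularity (the XNODE trajectory must stay in $U'$) with a first-exit-time argument---this is a genuine gap in the paper's presentation that your proposal fills.
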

\begin{proof}
In this proof, we constrain the XNODE models to the ones which has the hidden dimension 1 and identity map being their output layer. Therefore $\tilde{F}$ satisfies the below ODE similar to \eqref{ODE}:
\begin{eqnarray}\label{ODE-2}
\frac{\mathrm{d}\tilde F(t,x)}{\mathrm{d}t} = \mathcal{N}^{\text{vec}}_{\theta_1}(\tilde F, t; x), \quad \tilde F(0,x) = \mathcal{N}^{\text{init}}_{\theta_2}(h(x)),
\end{eqnarray}
where the activation functions (e.g. ReLu) are chosen such that the vector field is uniformly Lipschitz continuous. the wellposeness of the XNODE model is a directly consequence of Picard's Theorem. So is the function $F$ due to the Lipschitz continuity assumption of its vector field $F$.   

Thanks to the universality of neural network, for every $\epsilon > 0$, there exist two neural networks with sufficiently large number of hidden neurons and number of layers, denoted by $\mathcal{N}^{\text{vec}}_{\theta_1}$ and $\mathcal{N}^{\text{init}}_{\theta_2}$  such that 
\[
\sup_{\mathbf{y},t,\mathbf{x}}  |\mathcal{N}^{\text{vec}}_{\theta_1}(\mathbf{y},t;\mathbf{x}) - V(\mathbf{y},t;\mathbf{x})| + 
\sup_{\mathbf{x}} |\mathcal{N}^{\text{init}}_{\theta_2}(h(\mathbf{x}))- h(\mathbf{x})| 
\le C^{-1} \epsilon,
\]
where $C$ can be pre-determined through
\[
C = \exp{(LT)} + \exp{(LT)}/L.
\]
The stability result \eqref{stability} is then a direct consequence of Theorem A.\ref{ODE_stability_thm}.

\end{proof}

\section{Pseudocode of XNODE-WAN Algorithm}
\begin{table}[H]
\centering
\resizebox{\columnwidth}{!}{%
\begin{tabular}{l|l}
\hline
         Notation  & Meaning \\
         \hline
         $X_{\mathcal{D}}$ & Sampled collocation points of the whole domain $\mathcal{D}$ \\
$X_{\partial\mathcal{D}}$ & Sampled collocation points on the domain boundary $\partial\mathcal{D}$ \\
$X_0$ & Sampled collocation points at the initial condition  \\
$S^r$ & Sampled collocation points of the spatial domain  \\
$S^b$ & Collocation points of the spatial domain boundary\\
$\Pi_T$ & Sampled time partition \\
$N_\mathcal{D}$      & Number of sampled collocation points of the space-time domain $\mathcal{D}$\\
$N_{\partial \mathcal{D}}$      & Number of sampled collocation points of domain boundary $\partial\mathcal{D}$\\
$N_0$     & Number of sampled collocation points of the initial condition \\
$N_r$      & Number of sampled collocation points of the spatial domain \\
$N_b$ &  Number of sampled collocation points of the spatial domain boundary  \\
$n_T$ &  Number of sampled time partition \\
$K_u$ &  Inner iteration to update weak solution $u_\theta$ or $u_\Theta$ \\
$K_\phi$ &  Inner iteration to update test function $\phi_\eta$  \\
$\alpha$ &   Weight parameter of boundary loss $\tilde{L}_\text{bdry}$\\
$\gamma$ &  Weight parameter of initial loss $\tilde{L}_\text{init}$\\
$\epsilon$ & error tolerance\\
$\tau_\theta$ or $\tau_\Theta$ & Learning rate for the primal network \\
$\tau_\eta$ & Learning rate for network parameter $\eta$ of test function $\phi_\eta$\\

\hline
\end{tabular}}
\caption{List of algorithm parameters.}
\label{table2}
\end{table}

We present in the Table \ref{table2} the explanation for notations in algorithms. The following algorithm (Algorithm \ref{XNODE-WAN_indep_domain}) is the full pseudocode of the XNODE-WAN algorithm for a time-independent domain.
\begin{algorithm}[H]
\caption{XNODE-WAN Algorithm for time-independent domains}\label{XNODE-WAN_indep_domain}
\textbf{Input:} domain $\mathcal{D} =
\Omega \times [0,T]$, tolerance $\epsilon>0$, $N_r/N_b/n_T$: number of sampled points on spatial domain/spatial boundary/temporal domain, $K_u/K_\phi$: number of solution/adversarial network parameter update per iteration, $\alpha/\gamma \in \mathbb{R}^+$: the weight of boundary/initial loss 
\begin{algorithmic}[1]
\STATE{\textbf{Initialise:} the XNODE network $\phi_\Theta:\mathcal{D} \to \mathbb{R}$ and the DNN network $\phi_\eta:\mathcal{D} \to \mathbb{R}$
 }
\STATE{generate the sets of collection points $S^r, S^b$ and $\Pi_T$  } \\
\algorithmiccomment{random point sampling step}

\WHILE{$\tilde L(\Theta,\eta)>\epsilon$} 
\STATE{\# \texttt{update weak solution network parameter}}
\FOR{$k = 1,...,K_u$}
\STATE{compute $\nabla_\theta \tilde L(\Theta, \eta)$;}\\ \algorithmiccomment{gradient calculation step }
\STATE{update $\Theta\gets \Theta- \tau_\Theta \nabla_\Theta \tilde L(\Theta, \eta)$;} 
\ENDFOR
\STATE{\# \texttt{update test network parameter}}
\FOR{$k = 1,...,K_\phi$}
\STATE{compute $\nabla_\eta \tilde L(\Theta, \eta)$ using the points $X^r$;}
\STATE{update $\eta\gets\eta+\tau_\eta \nabla_\eta \tilde L(\Theta, \eta)$;} 
\ENDFOR
\STATE{generate the sets of collocation points  $S^r$, $S^b$ and $\Pi_T$;} \\\algorithmiccomment{random point sampling step}

\ENDWHILE\\
\textbf{Output:} $\{\mbox{XNODE}_\Theta(\mathbf{x}_i,\Pi_T)\}_{\mathbf{x}_i \in S^r \cup S^b}$
\\\algorithmiccomment{the weak solution estimator evaluated at $\{x\} \times \Pi_T$}
\end{algorithmic}

\end{algorithm}

\section{Numerical Results Summary}

Below we present some results from the comparisons of the XNODE-WAN and WAN models presented in \Cref{ex2} and \Cref{ex3} .

\begin{table}[H]
\centering
\resizebox{\columnwidth}{!}{
\begin{tabular}{|l|l|l|l|l|}
\hline
\multicolumn{5}{|c|}{ \Cref{ex2}: Cylinder Domain  }\\
\hline
  Model & Relative error  & Time per epoch (s) & $N_\varepsilon$ &  $T_\varepsilon$ (s)\\
\hline
  WAN &  1.0\%  &  0.38 & 15,278 & 5806 \\ 
 XNODE-WAN & 1.1\%  & 0.35 &  271  &  88 \\ 
\hline
\multicolumn{5}{|c|}{\Cref{ex3}: Hourglass domain }\\
\hline
  WAN & 8.3\%  &  0.38 & 21,594 & 8206 \\ 
 XNODE-WAN & 7.1\% & 0.43 &  221  &  95 \\
\hline
\end{tabular}}
\caption{Performance comparison between XNODE-WAN and WAN on various domains for \Cref{ex2} and \Cref{ex3}}\label{Tab1_eg1}.
\label{table}
\end{table}

\pagebreak
\bibliographystyle{elsarticle-num-names}
\bibliography{bibfile}

\begin{thebibliography}{24}
\expandafter\ifx\csname natexlab\endcsname\relax\def\natexlab#1{#1}\fi
\providecommand{\url}[1]{\texttt{#1}}
\providecommand{\href}[2]{#2}
\providecommand{\path}[1]{#1}
\providecommand{\DOIprefix}{doi:}
\providecommand{\ArXivprefix}{arXiv:}
\providecommand{\URLprefix}{URL: }
\providecommand{\Pubmedprefix}{pmid:}
\providecommand{\doi}[1]{\href{http://dx.doi.org/#1}{\path{#1}}}
\providecommand{\Pubmed}[1]{\href{pmid:#1}{\path{#1}}}
\providecommand{\bibinfo}[2]{#2}
\ifx\xfnm\relax \def\xfnm[#1]{\unskip,\space#1}\fi
\bibitem[{Han et~al.(2018)Han, Jentzen, and Weinan}]{han2018solving}
\bibinfo{author}{J.~Han}, \bibinfo{author}{A.~Jentzen},
  \bibinfo{author}{E.~Weinan},
\newblock \bibinfo{title}{Solving high-dimensional partial differential
  equations using deep learning},
\newblock \bibinfo{journal}{Proceedings of the National Academy of Sciences}
  \bibinfo{volume}{115} (\bibinfo{year}{2018}) \bibinfo{pages}{8505--8510}.
\bibitem[{Hutzenthaler et~al.(2020)Hutzenthaler, Jentzen, Kruse, Anh~Nguyen,
  and von Wurstemberger}]{hutzenthaler2020overcoming}
\bibinfo{author}{M.~Hutzenthaler}, \bibinfo{author}{A.~Jentzen},
  \bibinfo{author}{T.~Kruse}, \bibinfo{author}{T.~Anh~Nguyen},
  \bibinfo{author}{P.~von Wurstemberger},
\newblock \bibinfo{title}{Overcoming the curse of dimensionality in the
  numerical approximation of semilinear parabolic partial differential
  equations},
\newblock \bibinfo{journal}{Proceedings of the Royal Society A}
  \bibinfo{volume}{476} (\bibinfo{year}{2020}) \bibinfo{pages}{20190630}.
\bibitem[{Druzhkov and Kustikova(2016)}]{druzhkov2016survey}
\bibinfo{author}{P.~Druzhkov}, \bibinfo{author}{V.~Kustikova},
\newblock \bibinfo{title}{A survey of deep learning methods and software tools
  for image classification and object detection},
\newblock \bibinfo{journal}{Pattern Recognition and Image Analysis}
  \bibinfo{volume}{26} (\bibinfo{year}{2016}) \bibinfo{pages}{9--15}.
\bibitem[{Goldberg(2016)}]{goldberg2016primer}
\bibinfo{author}{Y.~Goldberg},
\newblock \bibinfo{title}{A primer on neural network models for natural
  language processing},
\newblock \bibinfo{journal}{Journal of Artificial Intelligence Research}
  \bibinfo{volume}{57} (\bibinfo{year}{2016}) \bibinfo{pages}{345--420}.
\bibitem[{Raissi et~al.(2019)Raissi, Perdikaris, and
  Karniadakis}]{raissi2019physics}
\bibinfo{author}{M.~Raissi}, \bibinfo{author}{P.~Perdikaris},
  \bibinfo{author}{G.~E. Karniadakis},
\newblock \bibinfo{title}{Physics-informed neural networks: A deep learning
  framework for solving forward and inverse problems involving nonlinear
  partial differential equations},
\newblock \bibinfo{journal}{Journal of Computational Physics}
  \bibinfo{volume}{378} (\bibinfo{year}{2019}) \bibinfo{pages}{686--707}.
\bibitem[{Li et~al.(2020)Li, Kovachki, Azizzadenesheli, Liu, Bhattacharya,
  Stuart, and Anandkumar}]{li2020multipole}
\bibinfo{author}{Z.~Li}, \bibinfo{author}{N.~Kovachki},
  \bibinfo{author}{K.~Azizzadenesheli}, \bibinfo{author}{B.~Liu},
  \bibinfo{author}{K.~Bhattacharya}, \bibinfo{author}{A.~Stuart},
  \bibinfo{author}{A.~Anandkumar},
\newblock \bibinfo{title}{Multipole graph neural operator for parametric
  partial differential equations},
\newblock \bibinfo{journal}{arXiv preprint arXiv:2006.09535}
  (\bibinfo{year}{2020}).
\bibitem[{He et~al.(2020)He, Hu, and Mu}]{he2020mesh}
\bibinfo{author}{C.~He}, \bibinfo{author}{X.~Hu}, \bibinfo{author}{L.~Mu},
\newblock \bibinfo{title}{A mesh-free method using piecewise deep neural
  network for elliptic interface problems},
\newblock \bibinfo{journal}{arXiv preprint arXiv:2005.04847}
  (\bibinfo{year}{2020}).
\bibitem[{Lu et~al.(2021)Lu, Meng, Mao, and Karniadakis}]{lu2021deepxde}
\bibinfo{author}{L.~Lu}, \bibinfo{author}{X.~Meng}, \bibinfo{author}{Z.~Mao},
  \bibinfo{author}{G.~E. Karniadakis},
\newblock \bibinfo{title}{Deepxde: A deep learning library for solving
  differential equations},
\newblock \bibinfo{journal}{SIAM Review} \bibinfo{volume}{63}
  (\bibinfo{year}{2021}) \bibinfo{pages}{208--228}.
\bibitem[{Berner et~al.(2020)Berner, Grohs, and Jentzen}]{berner2020analysis}
\bibinfo{author}{J.~Berner}, \bibinfo{author}{P.~Grohs},
  \bibinfo{author}{A.~Jentzen},
\newblock \bibinfo{title}{Analysis of the generalization error: Empirical risk
  minimization over deep artificial neural networks overcomes the curse of
  dimensionality in the numerical approximation of black--scholes partial
  differential equations},
\newblock \bibinfo{journal}{SIAM Journal on Mathematics of Data Science}
  \bibinfo{volume}{2} (\bibinfo{year}{2020}) \bibinfo{pages}{631--657}.
\bibitem[{Duan et~al.(2021)Duan, Jiao, Lai, Lu, and Yang}]{duan2021convergence}
\bibinfo{author}{C.~Duan}, \bibinfo{author}{Y.~Jiao}, \bibinfo{author}{Y.~Lai},
  \bibinfo{author}{X.~Lu}, \bibinfo{author}{Z.~Yang},
\newblock \bibinfo{title}{Convergence rate analysis for deep ritz method},
\newblock \bibinfo{journal}{arXiv preprint arXiv:2103.13330}
  (\bibinfo{year}{2021}).
\bibitem[{Luo and Yang(2020)}]{luo2020two}
\bibinfo{author}{T.~Luo}, \bibinfo{author}{H.~Yang},
\newblock \bibinfo{title}{Two-layer neural networks for partial differential
  equations: Optimization and generalization theory},
\newblock \bibinfo{journal}{arXiv preprint arXiv:2006.15733}
  (\bibinfo{year}{2020}).
\bibitem[{Gilbert and White(1989)}]{gilbert1989application}
\bibinfo{author}{K.~E. Gilbert}, \bibinfo{author}{M.~J. White},
\newblock \bibinfo{title}{Application of the parabolic equation to sound
  propagation in a refracting atmosphere},
\newblock \bibinfo{journal}{The Journal of the Acoustical Society of America}
  \bibinfo{volume}{85} (\bibinfo{year}{1989}) \bibinfo{pages}{630--637}.
\bibitem[{Douglas and Rachford(1956)}]{douglas1956numerical}
\bibinfo{author}{J.~Douglas}, \bibinfo{author}{H.~H. Rachford},
\newblock \bibinfo{title}{On the numerical solution of heat conduction problems
  in two and three space variables},
\newblock \bibinfo{journal}{Transactions of the American mathematical Society}
  \bibinfo{volume}{82} (\bibinfo{year}{1956}) \bibinfo{pages}{421--439}.
\bibitem[{Hahn et~al.(2011)Hahn, Tai, Borok, and
  Bruckstein}]{hahn2011orientation}
\bibinfo{author}{J.~Hahn}, \bibinfo{author}{X.-C. Tai},
  \bibinfo{author}{S.~Borok}, \bibinfo{author}{A.~M. Bruckstein},
\newblock \bibinfo{title}{Orientation-matching minimization for image denoising
  and inpainting},
\newblock \bibinfo{journal}{International journal of computer vision}
  \bibinfo{volume}{92} (\bibinfo{year}{2011}) \bibinfo{pages}{308--324}.
\bibitem[{Achdou and Pironneau(2005)}]{achdou2005computational}
\bibinfo{author}{Y.~Achdou}, \bibinfo{author}{O.~Pironneau},
  \bibinfo{title}{Computational methods for option pricing},
  \bibinfo{publisher}{SIAM}, \bibinfo{year}{2005}.
\bibitem[{Sirignano and Spiliopoulos(2018)}]{dgmsirignano2018}
\bibinfo{author}{J.~Sirignano}, \bibinfo{author}{K.~Spiliopoulos},
\newblock \bibinfo{title}{Dgm: A deep learning algorithm for solving partial
  differential equations},
\newblock \bibinfo{journal}{Journal of computational physics}
  \bibinfo{volume}{375} (\bibinfo{year}{2018}) \bibinfo{pages}{1339--1364}.
\bibitem[{Zang et~al.(2020)Zang, Bao, Ye, and Zhou}]{zang2020weak}
\bibinfo{author}{Y.~Zang}, \bibinfo{author}{G.~Bao}, \bibinfo{author}{X.~Ye},
  \bibinfo{author}{H.~Zhou},
\newblock \bibinfo{title}{Weak adversarial networks for high-dimensional
  partial differential equations},
\newblock \bibinfo{journal}{Journal of Computational Physics}
  \bibinfo{volume}{411} (\bibinfo{year}{2020}) \bibinfo{pages}{109409}.
\bibitem[{Chen et~al.(2018)Chen, Rubanova, Bettencourt, and
  Duvenaud}]{chen2018neural}
\bibinfo{author}{R.~T. Chen}, \bibinfo{author}{Y.~Rubanova},
  \bibinfo{author}{J.~Bettencourt}, \bibinfo{author}{D.~Duvenaud},
\newblock \bibinfo{title}{Neural ordinary differential equations},
\newblock \bibinfo{journal}{arXiv preprint arXiv:1806.07366}
  (\bibinfo{year}{2018}).
\bibitem[{Cai et~al.(2020)Cai, Chen, Liu, and Liu}]{cai2020deep}
\bibinfo{author}{Z.~Cai}, \bibinfo{author}{J.~Chen}, \bibinfo{author}{M.~Liu},
  \bibinfo{author}{X.~Liu},
\newblock \bibinfo{title}{Deep least-squares methods: An unsupervised
  learning-based numerical method for solving elliptic pdes},
\newblock \bibinfo{journal}{Journal of Computational Physics}
  \bibinfo{volume}{420} (\bibinfo{year}{2020}) \bibinfo{pages}{109707}.
\bibitem[{Weinan and Yu(2018)}]{weinan2018deep}
\bibinfo{author}{E.~Weinan}, \bibinfo{author}{B.~Yu},
\newblock \bibinfo{title}{The deep ritz method: a deep learning-based numerical
  algorithm for solving variational problems},
\newblock \bibinfo{journal}{Communications in Mathematics and Statistics}
  \bibinfo{volume}{6} (\bibinfo{year}{2018}) \bibinfo{pages}{1--12}.
\bibitem[{Samaniego et~al.(2020)Samaniego, Anitescu, Goswami, Nguyen-Thanh,
  Guo, Hamdia, Zhuang, and Rabczuk}]{samaniego2020energy}
\bibinfo{author}{E.~Samaniego}, \bibinfo{author}{C.~Anitescu},
  \bibinfo{author}{S.~Goswami}, \bibinfo{author}{V.~M. Nguyen-Thanh},
  \bibinfo{author}{H.~Guo}, \bibinfo{author}{K.~Hamdia},
  \bibinfo{author}{X.~Zhuang}, \bibinfo{author}{T.~Rabczuk},
\newblock \bibinfo{title}{An energy approach to the solution of partial
  differential equations in computational mechanics via machine learning:
  Concepts, implementation and applications},
\newblock \bibinfo{journal}{Computer Methods in Applied Mechanics and
  Engineering} \bibinfo{volume}{362} (\bibinfo{year}{2020})
  \bibinfo{pages}{112790}.
\bibitem[{Hong et~al.(2021)Hong, Siegel, and Xu}]{hong2021priori}
\bibinfo{author}{Q.~Hong}, \bibinfo{author}{J.~W. Siegel},
  \bibinfo{author}{J.~Xu},
\newblock \bibinfo{title}{A priori analysis of stable neural network solutions
  to numerical pdes},
\newblock \bibinfo{journal}{arXiv preprint arXiv:2104.02903}
  (\bibinfo{year}{2021}).
\bibitem[{Renardy and Rogers(2006)}]{renardy2006introduction}
\bibinfo{author}{M.~Renardy}, \bibinfo{author}{R.~C. Rogers},
  \bibinfo{title}{An introduction to partial differential equations},
  volume~\bibinfo{volume}{13}, \bibinfo{publisher}{Springer Science \& Business
  Media}, \bibinfo{year}{2006}.
\bibitem[{Teschl(2012)}]{teschl2012ordinary}
\bibinfo{author}{G.~Teschl}, \bibinfo{title}{Ordinary differential equations
  and dynamical systems}, volume \bibinfo{volume}{140},
  \bibinfo{publisher}{American Mathematical Soc.}, \bibinfo{year}{2012}.

\end{thebibliography}

\end{document}